\documentclass[reqno,12pt]{amsart}
\usepackage{amsfonts}
\usepackage{bbm}
\usepackage{} 
\setlength{\textheight}{23cm}
\setlength{\textwidth}{16cm}
\setlength{\oddsidemargin}{0cm}
\setlength{\evensidemargin}{0cm}
\setlength{\topmargin}{0cm}
\numberwithin{equation}{section}
\usepackage{indentfirst}
 \usepackage{color}
\usepackage{amssymb}
\usepackage{mathrsfs}
\usepackage{xy}
\xyoption{all}
\def\Ext{\mbox{\rm Ext}\,} \def\Hom{\mbox{\rm Hom}} \def\dim{\mbox{\rm dim}\,} \def\Iso{\mbox{\rm Iso}\,}
\def\LHS{\mbox{\rm LHS}}\def\RHS{\mbox{\rm RHS}}
\def\lr#1{\langle #1\rangle}    
     
\def\tw{\mbox{\rm tw}\,}\def\ce{\mbox{\rm ce}\,}

\def\K{\mathcal {K}}\def\sK{\mathscr {K}}
\def\Aut{\mbox{\rm Aut}\,}\def\A{\mathcal{A}\,} 

%
%
%
\theoremstyle{plain} 
\newtheorem{theorem}{\bf Theorem}[section]
\newtheorem{lemma}[theorem]{\bf Lemma}
\newtheorem{corollary}[theorem]{\bf Corollary}
\newtheorem{proposition}[theorem]{\bf Proposition}

\theoremstyle{definition} 
\newtheorem{definition}[theorem]{\bf Definition}
\newtheorem{remark}[theorem]{\bf Remark}
\newtheorem{example}[theorem]{\bf Example}

\newcommand{\bt}{\begin{theorem}}
\newcommand{\et}{\end{theorem}}
\newcommand{\bl}{\begin{lemma}}
\newcommand{\el}{\end{lemma}}
\newcommand{\bd}{\begin{definition}}
\newcommand{\ed}{\end{definition}}
\newcommand{\bc}{\begin{corollary}}
\newcommand{\ec}{\end{corollary}}
\newcommand{\bp}{\begin{proof}}
\newcommand{\ep}{\end{proof}}
\newcommand{\bx}{\begin{example}}
\newcommand{\ex}{\end{example}}
\newcommand{\br}{\begin{remark}}
\newcommand{\er}{\end{remark}}
\newcommand{\be}{\begin{equation}}
\newcommand{\ee}{\end{equation}}
\newcommand{\ba}{\begin{align}}
\newcommand{\ea}{\end{align}}
\newcommand{\bn}{\begin{enumerate}}
\newcommand{\en}{\end{enumerate}}
\newcommand{\bcs}{\begin{cases}}
\newcommand{\ecs}{\end{cases}}
%

%
\makeatletter
\renewcommand{\section}{\@startsection{section}{1}{0mm}
  {-\baselineskip}{0.5\baselineskip}{\bf\leftline}}
\makeatother

\begin{document}

\title[Drinfeld doubles, derived Hall algebras, Bridgeland Hall algebras]{Drinfeld doubles via derived Hall algebras \\
and Bridgeland Hall algebras} 

\author{Fan Xu and Haicheng Zhang} 


\subjclass[2010]{ 
16G20, 17B20, 17B37.
}
%
\keywords{ 
Heisenberg double; Drinfeld double; derived Hall algebra; Bridgeland Hall algebra.
}
\address{
Department of Mathematical Sciences, Tsinghua University, Beijing 100084, P. R. China.\endgraf}
\email{fanxu@mail.tsinghua.edu.cn}

\address{
Institute of Mathematics, School of Mathematical Sciences, Nanjing Normal University,
 Nanjing 210023, P. R. China.\endgraf}
\email{zhanghc@njnu.edu.cn}


\maketitle

\begin{abstract}
Let $\A$ be a finitary hereditary abelian category. We give a Hall algebra presentation of Kashaev's theorem on the relation between Drinfeld double and Heisenberg double. As applications, we obtain realizations of the Drinfeld double Hall algebra of $\A$ via its derived Hall algebra and Bridgeland Hall algebra of $m$-cyclic complexes.
\end{abstract}

\section{Introduction}
The Hall algebra $\mathfrak{H}(A)$ of a finite dimensional algebra $A$ over a finite field was introduced by Ringel \cite{R90a} in 1990. Ringel \cite{R90,R90a} proved that if $A$ is a hereditary algebra of finite type, the twisted Hall algebra $\mathfrak{H}_{v}(A)$, called the Ringel--Hall algebra, is isomorphic to the positive part of the corresponding quantized enveloping algebra. In 1995,
Green \cite{Gr95} generalized Ringel's work to any hereditary algebra $A$
and showed that the composition subalgebra of $\mathfrak{H}_{v}(A)$ generated by simple $A$-modules
gives a realization of the positive part of the quantized enveloping algebra associated with $A$. Moreover, he introduced a bialgebra structure on $\mathfrak{H}_{v}(A)$ via a significant formula called Green's formula. In 1997, Xiao \cite{Xiao} provided the antipode on $\mathfrak{H}_{v}(A)$, and proved that the extended Ringel--Hall algebra is a Hopf algebra. Furthermore, he considered the Drinfeld double of the extended Ringel--Hall algebras, and obtained a realization of the full quantized enveloping algebra.

In order to give an intrinsic realization of the entire quantized enveloping algebra via Hall algebra approach, one tried to define the Hall algebra of a triangulated category (for example, \cite{Kapranov}, \cite{Toen}, \cite{XiaoXu}).
Kapranov \cite{Kapranov} considered the Heisenberg double of the extended Ringel--Hall algebras, and defined an associative algebra, called the lattice algebra, for the bounded derived category of a hereditary algebra $A$. By using the fibre products of model categories, To\"{e}n \cite{Toen} defined an associative algebra, called the derived Hall algebra, for a DG-enhanced triangulated category.
Later on, Xiao and Xu \cite{XiaoXu} generalized the definition of the derived Hall algebra to any triangulated category with some homological finiteness conditions. In particular, the derived Hall algebra $\mathcal {D}\mathcal {H}(A)$ of the bounded derived category of a hereditary algebra $A$ can be defined, and it is proved in \cite{Toen} that there exist certain Heisenberg double structures in $\mathcal {D}\mathcal {H}(A)$.

Recently, for each hereditary algebra $A$, Bridgeland \cite{Br} defined an associative algebra, called the Bridgeland Hall algebra, which is the Ringel--Hall algebra of $2$-cyclic complexes over projective $A$-modules with some localization and reduction. He proved that the quantized enveloping algebra associated to \emph{A} can be embedded into its Bridgeland Hall algebra. This provides a beautiful realization of the full quantized enveloping algebra by Hall algebras. Afterwards, Yanagida \cite{Yan} (see also \cite{ZHC}) showed that the Bridgeland Hall algebra of $2$-cyclic complexes of a hereditary algebra is isomorphic to the Drinfeld double of its extended Ringel--Hall algebras. Inspired by the work of Bridgeland, Chen and Deng \cite{ChenD} introduced the Bridgeland Hall algebra $\mathcal {D}\mathcal {H}_m(A)$ of $m$-cyclic complexes of a hereditary algebra $A$ for each nonnegative integer $m\neq1$. If $m=0$ or $m>2$, the algebra structure of $\mathcal {D}\mathcal {H}_m(A)$ has a characterization in \cite{ZHC2}, in particular, it is proved that there exist Heisenberg double structures in $\mathcal {D}\mathcal {H}_m(A)$.

Kashaev \cite{Kas} established a relation between the Drinfeld double and Heisenberg double of a Hopf algebra. Explicitly, he showed that the Drinfeld double is representable as a subalgebra in the tensor square of the Heisenberg double.

In this paper, let $\A$ be a finitary hereditary abelian category. We first give a Hall algebra presentation of Kashaev's Theorem on the relation between Drinfeld double and Heisenberg double. Then we apply this presentation to the Bridgeland Hall algebra and derived Hall algebra of $\A$.

Throughout the paper, all tensor products are assumed to be  over the complex number field $\mathbb{C}$. Let $k$ be a fixed finite field with $q$ elements and set $v=\sqrt{q}\in \mathbb{C}$. Let $\A$ be a finitary hereditary abelian $k$-category. We denote by $\Iso(\A)$ and $K(\A)$ the set of isoclasses of objects in $\A$ and the Grothendieck group of $\A$, respectively. For each object $M$ in $\A$, the class of $M$ in $K(\A)$ is denoted by $\hat{M}$, and the automorphism group of $M$ is denoted by $\Aut(M)$. For a finite set $S$, we denote by $|S|$ its cardinality, and we also write $a_M$ for $|\Aut(M)|$. For a positive integer $m$, we denote the quotient ring $\mathbb{Z}/m\mathbb{Z}$ by $\mathbb{Z}_m=\{0,1,\ldots,m-1\}$. By convention, $\mathbb{Z}_0=\mathbb{Z}$.

\section{Preliminaries}
In this section, we recall the definitions of Ringel--Hall algebra, Heisenberg double, and Drinfeld double (cf. \cite{Sch,Xiao,Kapranov}).
\subsection{Hall algebras}
For objects $M,N_1,\ldots,N_t\in\A$, let $g_{N_1\cdots N_t}^M$ be the number of the filtrations
$$M=M_0\supseteq M_1\supseteq\cdots\supseteq M_{t-1}\supseteq M_t=0$$
such that $M_{i-1}/M_i\cong N_i$ for all $1\leq i\leq t$. In particular, if $t=2$, $g_{N_1N_2}^M$ is the number of subobjects $X$ of $M$ such that $X\cong N_2$ and $M/X\cong N_1$.
One defines the \emph{Hall algebra} $\mathfrak{H}(\A)$ to be the vector space over $\mathbb{C}$ with basis $[M]\in \Iso(\A)$ and with the multiplication defined by
\[[M] \diamond [N] = \sum\limits_{[L]} g_{MN}^L[L].\]
By definition, it is easy to see that for each $1< i< t$,
$$g_{N_1\cdots N_t}^M=\sum\limits_{[X]}g_{N_1\cdots N_{i-1}X}^Mg_{N_{i}\cdots N_t}^X=\sum\limits_{[Y]}g_{N_1\cdots N_{i}}^Yg_{YN_{i+1}\cdots N_t}^M.$$

For any $M, N\in\A$, define $$\lr{M,N}:=\dim_k{\Hom_{\A}(M,N)}-\dim_k{\Ext_{\A}^{1}(M,N)}.$$
It induces a bilinear form
$$\lr{\cdot ,\cdot }: K(\A)\times K(\A)\longrightarrow \mathbb{Z},$$ known as the \emph{Euler form}. We also consider the \emph{symmetric Euler form}
$$(\cdot ,\cdot ): K(\A)\times K(\A)\longrightarrow \mathbb{Z},$$ defined by $(\alpha,\beta)=\lr{\alpha,\beta}+\lr{\beta,\alpha}$ for all $\alpha,\beta \in K(\A)$.

The twisted Hall algebra $\mathfrak{H}_v(\A)$, called the \emph{Ringel--Hall algebra}, is the same vector space as $\mathfrak{H}(\A)$ but with the twisted multiplication defined by $$[M][N]=v^{\lr{{M},\,{N}}}\cdot[M]\diamond[N].$$
We can form the \emph{extended Ringel--Hall algebra} $\mathfrak{H}_v^e(\A)$ by adjoining symbols $K_\alpha$ for all $\alpha\in K(\A)$ and imposing relations
\begin{equation}
K_\alpha K_\beta=K_{\alpha+\beta},~~~~K_\alpha[M]=v^{(\alpha,\,\hat{M})}\cdot [M]K_\alpha.
\end{equation}

Green \cite{Gr95} introduced a (topological) bialgebra structure on $\mathfrak{H}_v^e(\A)$ by defining the comultiplication as follows:
$$\Delta([L]K_{\alpha})
=\sum\limits_{[M],[N]}v^{\lr{M,N}}\frac{a_{M}a_{N}}{a_{L}}g_{MN}^{L}[M]K_{\hat{N}+\alpha}\otimes [N]K_{\alpha}, \text{\ for\ any\ }  L\in\A, \alpha\in K(\A).$$
That $\Delta$ is a homomorphism of algebras amounts to the following crucial formula.
\begin{theorem}{\rm\textbf{(Green's formula)}}
Given $M,N,M',N'\in\A$, we have the following formula
\begin{equation}
\begin{split}
&a_Ma_Na_{M'}a_{N'}\sum\limits_{[L]}g_{MN}^Lg_{M'N'}^L\frac{1}{a_L}\\&=
\sum\limits_{[A],[A'],[B],[B']}\frac{|\Ext_{\A}^1(A,B')|}{|\Hom_{\A}(A,B')|}
g_{AA'}^{M}g_{BB'}^{N}g_{AB}^{M'}g_{A'B'}^{N'}a_{A}a_{A'}a_{B}a_{B'}.
\end{split}
\end{equation}
\end{theorem}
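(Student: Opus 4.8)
The plan is to prove Green's formula by a direct double-counting argument, comparing two ways of enumerating a set of configurations attached to a common overobject $L$. First I would expand the left-hand side combinatorially: for fixed $M,N$, a term $g_{MN}^L$ counts subobjects $X\subseteq L$ with $X\cong N$ and $L/X\cong M$; similarly $g_{M'N'}^L$ counts subobjects $Y\subseteq L$ with $Y\cong N'$ and $L/Y\cong M'$. So after clearing the automorphism factors, the left side is (up to the $a$-weights) counting triples $(L, \text{inclusion of }N, \text{inclusion of }N')$, or equivalently pairs of short exact sequences $0\to N\to L\to M\to 0$ and $0\to N'\to L\to M'\to 0$ with the \emph{same} middle term $L$. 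The normalization by $1/a_L$ and multiplication by $a_Ma_Na_{M'}a_{N'}$ is exactly the bookkeeping that converts ``number of subobjects'' into ``number of morphisms'' or groupoid cardinalities, so I would first reformulate both sides as groupoid counts (cardinalities of sets of diagrams modulo no symmetry) to make the two sides literally sets to be matched.

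Second, I would analyze the interaction of the two subobjects $X = N \hookrightarrow L$ and $Y = N' \hookrightarrow L$ inside $L$. The natural invariants are $A' := X\cap Y$, then $A := X/A'$, $B := Y/A'$ viewed as subobjects of $L/A'$, and $B' := L/(X+Y)$. This produces the four objects $A,A',B,B'$ and the four Hall numbers $g^M_{AA'}$ (from $X = A'\hookrightarrow X \twoheadrightarrow$, combined with $L/X = M$ getting a filtration), $g^N_{BB'}$, $g^{M'}_{AB}$, $g^{N'}_{A'B'}$ appearing on the right-hand side; the point is that giving $L$ together with the two subobjects is the same as giving these four pieces plus gluing data. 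The correction factor $|\Ext^1_{\A}(A,B')|/|\Hom_{\A}(A,B')|$ is precisely the count of the ambiguity in reconstructing $L$ from the ``corner'' data: $\Hom(A,B')$ accounts for automorphisms/overcounting of the gluing, and $\Ext^1(A,B')$ for the genuinely distinct extensions that can occur in the $3\times 3$ lattice of subquotients. I would make this precise by setting up the canonical $3\times3$ diagram (nine-lemma grid) whose rows and columns are short exact sequences built from $A',A,B,B'$, and checking that the middle object is exactly $L$.

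The main obstacle, and the technical heart of the proof, is the precise count of that reconstruction ambiguity — showing that the set of isomorphism classes of valid $L$'s (with the two filtrations) lying over a fixed corner datum $(A,A',B,B')$, weighted appropriately by automorphisms, has cardinality $|\Ext^1_{\A}(A,B')|/|\Hom_{\A}(A,B')|$ and not some other combination of Euler-form factors. This requires the hereditary hypothesis on $\A$ (so that $\Ext^{\geq 2}$ vanishes and the relevant extension-reconstruction problems are governed only by $\Hom$ and $\Ext^1$), and it requires carefully tracking which extension classes are free to vary versus forced. I would handle this by fixing the two sub-short-exact-sequences $0\to A'\to X\to A\to 0$ and $0\to X\to L\to L/X\to 0$ and analyzing the action of the relevant $\Hom$ and $\Ext^1$ groups on the set of completions of the grid, invoking a long exact sequence in $\Ext$ to identify the stabilizers and orbits. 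Once the orbit-counting lemma is in place, reassembling the weighted sums on both sides and matching the automorphism factors $a_Aa_{A'}a_Ba_{B'}$ against $a_Ma_Na_{M'}a_{N'}/a_L$ is a bookkeeping exercise using the orbit-counting (Burnside-type) identities $\sum_{[L]} (\text{stuff})/a_L = (\text{groupoid count})$ that were implicitly used in step one, and the formula follows.
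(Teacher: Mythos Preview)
The paper does not actually prove this theorem: Green's formula is stated as a known result and attributed to Green \cite{Gr95} (see also the exposition in \cite{Sch}). So there is no ``paper's own proof'' to compare against; the authors simply invoke the formula where needed.

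Your outline is essentially the standard proof going back to Green's original paper: interpret the left-hand side as a weighted count of ``crosses'' (pairs of short exact sequences with common middle term $L$), decompose each cross via the $3\times 3$ lemma into four corner objects, and show that the reconstruction ambiguity is governed by $|\Ext^1(A,B')|/|\Hom(A,B')|$, which is where the hereditary hypothesis enters. This is the right strategy and, carried out carefully, it works. One caution: your labeling of the corners is scrambled relative to the formula. With $X=N\hookrightarrow L$ and $Y=N'\hookrightarrow L$, the intersection $X\cap Y$ is the object that sits inside both $N$ and $N'$, so it must be $B'$ (since $g_{BB'}^N$ and $g_{A'B'}^{N'}$ both require $B'$ to be a subobject), while $L/(X+Y)$ is the common quotient of $M$ and $M'$, hence $A$. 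You should redo the identification of $A,A',B,B'$ with the entries of the $3\times 3$ grid before writing out the orbit count; otherwise the Hall numbers on the right will not match. Apart from this bookkeeping, your plan is sound and coincides with the canonical argument.
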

\subsection{Heisenberg doubles}
Let $A$ and $B$ be Hopf algebras, and let $\varphi:A\times B\to\mathbb{C}$ be a Hopf pairing. The \emph{Heisenberg double} $HD(A,B,\varphi)$ is defined to be the free product $A\ast B$ imposed by the following relations (with $a\in A$ and $b\in B$):
$$b\ast a=\sum\varphi(a_2,b_1)a_1\ast b_2,$$
where and elsewhere we use Sweedler's notation $\Delta(a)=\sum a_1\otimes a_2$.

There exists a so-called Green's pairing $\varphi_0:\mathfrak{H}_v^e(\A)\times\mathfrak{H}_v^e(\A)\to\mathbb{C}$ defined by
$$\varphi_0([M]K_\alpha,[N]K_\beta)=\delta_{[M],[N]}\frac{v^{(\alpha,\beta)}}{a_M},$$ which is a Hopf pairing.

Now let us apply the construction of Heisenberg double to Ringel--Hall algebras. Let $H^+(\A)$ (resp. $H^-(\A)$) be the Ringel--Hall algebra $\mathfrak{H}_v^e(\A)$ with each $[M]K_\alpha$ rewritten as $\mu_M^+K_\alpha^+$ (resp. $\mu_M^-K_\alpha^-$). Thus, considering $A=H^-(\A)$, $B=H^+(\A)$ and $\varphi=\varphi_0$, we obtain the \emph{Heisenberg double Hall algebra}, denoted by $HD(\A)$. By direct calculations, we give the characterization of $HD(\A)$ via generators and generating relations (with $\alpha,\beta\in K(\A)$ and $[M],[N]\in\Iso(\A)$) as follows (cf. \cite{Kapranov}):
\begin{flalign}
&\mu_M^+\mu_N^+=\sum\limits_{[L]}v^{\lr{M,N}}g_{MN}^L\mu_L^+,\quad\quad\mu_M^-\mu_N^-=\sum\limits_{[L]}v^{\lr{M,N}}g_{MN}^L\mu_L^-;\label{a1}\\
&K_{\alpha}^+\mu_M^+=v^{(\alpha,\hat{M})}\mu_M^+K_{\alpha}^+,\quad\quad K_{\alpha}^-\mu_M^-=v^{(\alpha,\hat{M})}\mu_M^-K_{\alpha}^-;\label{a2}\\
&K_\alpha^\pm K_\beta^\pm=K_{\alpha+\beta}^\pm,\quad\quad K_\alpha^+ K_\beta^-=v^{(\alpha,\beta)}K_\beta^-K_\alpha^+;\label{a3}\\
&K_\alpha^+\mu_M^-=\mu_M^-K_\alpha^+,\quad\quad K_\alpha^-\mu_M^+=v^{-(\alpha,\hat{M})}\mu_M^+K_\alpha^-;\label{a4}\\
&\mu_M^+\mu_N^-=\sum\limits_{[X],[Y]}v^{\lr{\hat{N}-\hat{Y},\hat{X}-\hat{Y}}}\gamma_{MN}^{XY}K_{\hat{N}-\hat{Y}}^-\mu_Y^-\mu_X^+;\label{a5}
\end{flalign}
where and elsewhere $\gamma_{MN}^{XY}=\frac{a_Xa_Y}{a_Ma_N}\sum\limits_{[L]}a_Lg_{LX}^Mg_{YL}^N$.

Similarly, one defines the \emph{dual Heisenberg double Hall algebra} $\check{H}D(\A)$, which is given by the generators and generating relations
(with $\alpha,\beta\in K(\A)$ and $[M],[N]\in\Iso(\A)$) as follows:
\begin{flalign}
&\nu_M^+\nu_N^+=\sum\limits_{[L]}v^{\lr{M,N}}g_{MN}^L\nu_L^+,\quad\quad\nu_M^-\nu_N^-=\sum\limits_{[L]}v^{\lr{M,N}}g_{MN}^L\nu_L^-;\\
&\K_{\alpha}^+\nu_M^+=v^{(\alpha,\hat{M})}\nu_M^+\K_{\alpha}^+,\quad\quad \K_{\alpha}^-\nu_M^-=v^{(\alpha,\hat{M})}\nu_M^-\K_{\alpha}^-;\\
&\K_\alpha^\pm \K_\beta^\pm=\K_{\alpha+\beta}^\pm,\quad\quad \K_\alpha^+ \K_\beta^-=v^{-(\alpha,\beta)}\K_\beta^-\K_\alpha^+;\\
&\K_\alpha^-\nu_M^+=\nu_M^+\K_\alpha^-,\quad\quad \K_\alpha^+\nu_M^-=v^{-(\alpha,\hat{M})}\nu_M^-\K_\alpha^+;\\
&\nu_N^-\nu_M^+=\sum\limits_{[X],[Y]}v^{\lr{\hat{N}-\hat{Y},\hat{Y}-\hat{X}}}\gamma_{NM}^{YX}\K_{\hat{N}-\hat{Y}}^+\nu_X^+\nu_Y^-\label{b5}.
\end{flalign}

\subsection{Drinfeld doubles}
Let $A$ and $B$ be Hopf algebras, and let $\varphi:A\times B\to\mathbb{C}$ be a Hopf pairing. The \emph{Drinfeld double} $D(A,B,\varphi)$ is defined to be the free product $A*B$ imposed by the following relations (with $a\in A$ and $b\in B$):
\begin{equation}\sum\varphi(a_1,b_2)b_1\ast a_2=\sum\varphi(a_2,b_1)a_1\ast b_2.\end{equation}

Applying the construction of Drinfeld double to the Ringel--Hall algebras $H^-(\A)$ and $H^+(\A)$, we obtain the \emph{Drinfeld double Hall algebra}, denoted by $D(\A)$, which is defined by the generators and generating relations (with $\alpha,\beta\in K(\A)$, $[M],[N]\in\Iso(\A)$) as follows:
\begin{flalign}
&\omega_M^+\omega_N^+=\sum\limits_{[L]}v^{\lr{M,N}}g_{MN}^L\omega_L^+,\quad\quad\omega_M^-\omega_N^-=\sum\limits_{[L]}v^{\lr{M,N}}g_{MN}^L\omega_L^-\label{d1};\\
&\sK_{\alpha}^+\omega_M^+=v^{(\alpha,\hat{M})}\omega_M^+\sK_{\alpha}^+,\quad\quad \sK_{\alpha}^-\omega_M^-=v^{(\alpha,\hat{M})}\omega_M^-\sK_{\alpha}^-\label{d2};\\
&\sK_\alpha^\pm \sK_\beta^\pm=\sK_{\alpha+\beta}^\pm,\quad\quad \sK_\alpha^+\sK_\beta^-=\sK_\beta^-\sK_\alpha^+\label{d3};\\
&\sK_\alpha^+\omega_M^-=v^{-(\alpha,\hat{M})}\omega_M^-\sK_\alpha^+,\quad\quad \sK_\alpha^-\omega_M^+=v^{-(\alpha,\hat{M})}\omega_M^+\sK_\alpha^-\label{d4};\\
&\sum\limits_{[X],[Y]}v^{\lr{\hat{M}-\hat{X},\hat{M}-\hat{N}}}\gamma_{MN}^{XY}\sK_{\hat{M}-\hat{X}}^-\omega_Y^-\omega_X^+=
\sum\limits_{[X],[Y]}v^{\lr{\hat{M}-\hat{X},\hat{N}-\hat{M}}}\gamma_{NM}^{YX}\sK_{\hat{M}-\hat{X}}^+\omega_X^+\omega_Y^-\label{d5}.
\end{flalign}

\section{Kashaev's theorem: Hall algebra presentation}
In this section, we prove Kashaev's theorem \cite[Theorem 2]{Kas} in the form of Ringel--Hall algebras. There are some similar constructions in \cite{Kap}, but they are not so natural.
\begin{theorem}\label{zyjg}
There exists an embedding of algebras $I:D(\A)\hookrightarrow HD(\A)\otimes\check{H}D(\A)$ defined on generators by
\begin{flalign*}
\sK_\alpha^+\mapsto K_\alpha^+\otimes\K_\alpha^+,\quad \omega_M^+\mapsto\sum\limits_{[M_1],[M_2]}v^{\lr{M_1,M_2}}
\frac{a_{M_1}a_{M_2}}{a_M}g_{M_1M_2}^M\mu_{M_1}^+K_{\hat{M}_2}^+\otimes\nu_{M_2}^+,
\end{flalign*} and
\begin{flalign*}
\sK_\alpha^-\mapsto K_\alpha^-\otimes\K_\alpha^-,\quad \omega_M^-\mapsto\sum\limits_{[M_1],[M_2]}v^{\lr{M_2,M_1}}
\frac{a_{M_1}a_{M_2}}{a_M}g_{M_2M_1}^M\mu_{M_1}^-\otimes\nu_{M_2}^-\K_{\hat{M}_1}^-.
\end{flalign*}
\end{theorem}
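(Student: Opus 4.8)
The plan is to verify directly that the map $I$ on generators respects all the defining relations \eqref{d1}--\eqref{d5} of $D(\A)$, and then to argue injectivity separately. Since $HD(\A)\otimes\check{H}D(\A)$ is an algebra and $D(\A)$ is presented by generators and relations, exhibiting $I$ as a well-defined homomorphism amounts to checking that the images of the left-hand and right-hand sides of each relation agree. I would organize this by reducing everything to the structure constants $g_{\bullet\bullet}^{\bullet}$ and the Euler form, and then comparing coefficients of each monomial $\mu^-_{\bullet}\mu^+_{\bullet}\otimes\nu^+_{\bullet}\nu^-_{\bullet}$ (in a fixed normal-ordered form) on both sides.

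First I would dispose of the ``easy'' relations. The relations \eqref{d1} involving only $\omega^+$ (or only $\omega^-$) and the $\sK^{\pm}$-commutation relations \eqref{d2}--\eqref{d4} should follow from: (i) the associativity identity $g_{N_1\cdots N_t}^M=\sum_{[X]}g_{N_1\cdots N_{i-1}X}^M g_{N_i\cdots N_t}^X$ recalled after the definition of $\mathfrak{H}(\A)$, (ii) the fact that $[M]\mapsto \sum v^{\lr{M_1,M_2}}\frac{a_{M_1}a_{M_2}}{a_M}g_{M_1M_2}^M\,\mu^+_{M_1}K^+_{\hat M_2}\otimes\nu^+_{M_2}$ is, up to the sign convention, precisely Green's comultiplication $\Delta$ composed with the embedding $H^+(\A)\otimes H^+(\A)\hookrightarrow HD(\A)\otimes\check{H}D(\A)$, together with the relations \eqref{a1}--\eqref{a4} and \eqref{b5} of the two Heisenberg doubles. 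Concretely, the $\omega^+\omega^+$ relation becomes the statement that $\Delta$ is an algebra homomorphism on the ``off-diagonal'' part $H^+\subseteq \mathfrak{H}_v^e(\A)$, which is a tautology given Green's formula (Theorem 2.2); the $\sK$-relations are immediate from the $K$-commutation rules in both factors and the compatibility $(\alpha,\beta)=\lr{\alpha,\beta}+\lr{\beta,\alpha}$.

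The heart of the proof is the mixed relation \eqref{d5}, which encodes the Drinfeld double commutation between $\omega^+$ and $\omega^-$. Here I would substitute the definitions of $I(\omega_M^-)$ and $I(\omega_N^+)$ (splitting $M$ via $g_{M_2M_1}^M$ and $N$ via $g_{N_1N_2}^N$), move past the tensor-factor structure, and then inside the first factor apply the Heisenberg double straightening rule \eqref{a5} to reorder $\mu^+_{N_1}$ past $\mu^-_{M_1}$, while inside the second factor apply \eqref{b5} to reorder $\nu^+_{N_2}$ past $\nu^-_{M_2}$; the $K$'s and $\K$'s get carried along using \eqref{a2}--\eqref{a4} and their dual analogues. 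Both sides of \eqref{d5} then become sums over several auxiliary isoclasses of products $v^{(\text{Euler exponents})}\cdot(\text{products of }\gamma\text{'s and }g\text{'s})\cdot K^-_\bullet\mu^-_\bullet\mu^+_\bullet\otimes\K^+_\bullet\nu^+_\bullet\nu^-_\bullet$. Matching them reduces, after absorbing the $v$-powers via bilinearity of $\lr{\cdot,\cdot}$ and the identity $\gamma_{MN}^{XY}=\frac{a_Xa_Y}{a_Ma_N}\sum_{[L]}a_Lg_{LX}^Mg_{YL}^N$, to a combinatorial identity among Hall numbers that is exactly (a repackaging of) Green's formula (Theorem 2.2) — the $\frac{|\Ext^1(A,B')|}{|\Hom(A,B')|}$ factor reappearing through $v^{-(\hat A,\hat{B'})}=v^{\lr{A,B'}+\lr{B',A}}$ after the standard manipulation $\frac{|\Hom|}{|\Ext^1|}=v^{2\lr{A,B'}}$. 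I expect \emph{this} step — tracking the $v$-exponents through the two independent Heisenberg-double reorderings and recognizing the resulting Hall-number identity as Green's formula — to be the main obstacle; it is essentially a bookkeeping computation but an intricate one, and getting the exponents in \eqref{d5} to match on the nose is where all the sign and Euler-form conventions must be handled with care.

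For injectivity, I would use a filtration/grading argument: both $D(\A)$ and $HD(\A)\otimes\check{H}D(\A)$ are graded by $K(\A)\times K(\A)$ (recording the classes of the $+$ and $-$ parts), and $I$ is homogeneous. On each graded piece one can further filter by, say, $\dim$ of the $+$-component; the associated graded of $I(\omega_M^+)$ picks out its ``leading term'' $\mu_M^+\otimes 1$ (the summand with $M_2=0$), and similarly for $\omega_M^-$, while $I$ sends the torus part $\sK^\pm_\alpha$ injectively since $\{K^+_\alpha\otimes\K^+_\alpha\}$ are linearly independent. Since $HD(\A)$ and $\check H D(\A)$ each have a PBW-type basis in terms of $\mu$'s, $\nu$'s and $K$'s, one checks that the leading terms of the images of the standard monomial basis $\{\omega^-_{M}\sK^-_\alpha\sK^+_\beta\omega^+_N\}$ of $D(\A)$ remain linearly independent in $HD(\A)\otimes\check HD(\A)$, forcing $I$ to be injective. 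Alternatively, one may cite that over $\mathbb{C}$ the Drinfeld double $D(\A)$ has no proper ideals meeting the two Hall subalgebras trivially and that $I$ is injective on each, which already gives injectivity; I would present the grading argument as it is self-contained given only the PBW bases recorded in Section 2.
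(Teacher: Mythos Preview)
Your approach is essentially the paper's: verify that $I$ preserves each defining relation of $D(\A)$ and then argue injectivity. One correction worth flagging: you predict that relation $(\ref{d5})$ will, after straightening via $(\ref{a5})$ and $(\ref{b5})$, reduce to an instance of Green's formula. In the paper's computation this is not so. After expanding the $\gamma$'s via their definition and applying $(\ref{b5})$ to the factor $\nu^-\nu^+$ appearing in the image of the left-hand side (respectively $(\ref{a5})$ to the factor $\mu^+\mu^-$ in the image of the right-hand side), the two resulting expressions match by a direct relabeling of the six summation indices together with a term-by-term check that the $v$-exponents coincide; no nontrivial Hall-number identity is invoked. Green's formula is used only for $(\ref{d1})$, exactly where you already expect it. So your plan goes through, but the ``main obstacle'' you flag is pure bookkeeping of Euler-form exponents rather than a further appeal to Green. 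For injectivity, the paper gives a one-line argument from the triangular decomposition $D(\A)\cong H^+(\A)\otimes H^-(\A)$ together with injectivity of $I$ on each of $H^{\pm}(\A)$; your leading-term/PBW argument is a more explicit version of the same idea.
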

\begin{proof}
In order to prove that $I$ is a homomorphism of algebras, it suffices to show that the relations from $(\ref{d1})$ to $(\ref{d5})$ are preserved under $I$. We only prove the relations $(\ref{d1})$ and $(\ref{d5})$, since the other relations can be easily proved.

For the first relation in $(\ref{d1})$,
\begin{flalign*}
&\sum\limits_{[L]}v^{\lr{M,N}}g_{MN}^LI(\omega_L^+)=
\sum\limits_{[L],[L_1],[L_2]}v^{\lr{M,N}+\lr{L_1,L_2}}\frac{a_{L_1}a_{L_2}}{a_L}g_{MN}^Lg_{L_1L_2}^L\mu_{L_1}^+K_{\hat{L}_2}^+\otimes\nu_{L_2}^+.
\end{flalign*}
\begin{flalign*}
&I(\omega_M^+)I(\omega_N^+)=\\
&\sum\limits_{[M_1],[M_2],[N_1],[N_2]}v^{\lr{M_1,M_2}+\lr{N_1,N_2}}\frac{a_{M_1}a_{M_2}a_{N_1}a_{N_2}}{a_Ma_N}g_{M_1M_2}^Mg_{N_1N_2}^N
\mu_{M_1}^+K_{\hat{M}_2}^+\mu_{N_1}^+K_{\hat{N}_2}^+\otimes\nu_{M_2}^+\nu_{N_2}^+
\end{flalign*}
\begin{flalign*}
=\sum\limits_{[M_1],[M_2],[N_1],[N_2]}
v^{x_0}\frac{a_{M_1}a_{M_2}a_{N_1}a_{N_2}}{a_Ma_N}&g_{M_1M_2}^Mg_{N_1N_2}^N
\mu_{M_1}^+\mu_{N_1}^+K_{\hat{M}_2+\hat{N}_2}^+\otimes\nu_{M_2}^+\nu_{N_2}^+\\&(x_0=\lr{M_1,M_2}+\lr{N_1,N_2}+(M_2,N_1))
\end{flalign*}
\begin{flalign*}
=\sum\limits_{[M_1],[M_2],[N_1],[N_2],[L_1],[L_2]}
&v^{x_1}\frac{a_{M_1}a_{M_2}a_{N_1}a_{N_2}}{a_Ma_N}g_{M_1M_2}^Mg_{N_1N_2}^N
g_{M_1N_1}^{L_1}g_{M_2N_2}^{L_2}
\mu_{L_1}^+K_{\hat{L}_2}^+\otimes\nu_{L_2}^+\quad\quad(*)\\
&(x_1=\lr{M_1,M_2}+\lr{N_1,N_2}+(M_2,N_1)+\lr{M_1,N_1}+\lr{M_2,N_2}).
\end{flalign*}
For each fixed $L_1,L_2$, noting that in $(*)$ $\hat{M}=\hat{M}_1+\hat{M}_2$, $\hat{N}=\hat{N}_1+\hat{N}_2$, $\hat{L}_i=\hat{M}_i+\hat{N}_i$ for $i=1,2$, we obtain that $x_1=\lr{M,N}+\lr{L_1,L_2}-2\lr{M_1,N_2}$. Thus, by Green's formula, we conclude that
\begin{flalign*}
\sum\limits_{[M_i],[N_i],i=1,2}
v^{x_1}\frac{a_{M_1}a_{M_2}a_{N_1}a_{N_2}}{a_Ma_N}g_{M_1M_2}^Mg_{N_1N_2}^N
g_{M_1N_1}^{L_1}g_{M_2N_2}^{L_2}=\sum\limits_{[L]}v^{\lr{M,N}+\lr{L_1,L_2}}\frac{a_{L_1}a_{L_2}}{a_L}g_{MN}^Lg_{L_1L_2}^L
\end{flalign*}
and thus $$I(\omega_M^+)I(\omega_N^+)=\sum\limits_{[L]}v^{\lr{M,N}}g_{MN}^LI(\omega_L^+).$$
Similarly, we can prove that the second relation in (\ref{d1}) is also preserved under $I$.

Now, we come to prove that the relation in (\ref{d5}) is preserved under $I$. First of all, substituting $\gamma_{MN}^{XY}=\frac{a_Xa_Y}{a_Ma_N}\sum\limits_{[L]}a_Lg_{LX}^Mg_{YL}^N$ into (\ref{d5}), we rewrite (\ref{d5}) as follows:
\begin{flalign*}
\sum\limits_{[X],[Y],[L]}v^{\lr{\hat{L},\hat{M}-\hat{N}}}a_Xa_Ya_Lg_{LX}^Mg_{YL}^N\sK_{\hat{L}}^-\omega_Y^-\omega_X^+
=\sum\limits_{[X],[Y],[L]}v^{\lr{\hat{L},\hat{N}-\hat{M}}}a_{X}a_{Y}a_{L}g_{XL}^Mg_{LY}^N\sK_{\hat{L}}^+\omega_{X}^+\omega_{Y}^-.\end{flalign*}
On the one hand,
\begin{flalign*}
&\LHS:=\sum\limits_{[X],[Y],[L]}v^{\lr{\hat{L},\hat{M}-\hat{N}}}a_Xa_Ya_Lg_{LX}^Mg_{YL}^NI(\sK_{\hat{L}}^-)I(\omega_Y^-)I(\omega_X^+)\\
&=\sum\limits_{{\tiny\begin{array}{cc}
&[X],[Y],[L],\\ [-0.6ex]
&[Y_1],[Y_2],[X_1],[X_2]\end{array}}}v^{y_0}a_{X_1}a_{X_2}a_{Y_1}a_{Y_2}a_Lg_{LX}^Mg_{X_1X_2}^Xg_{Y_2Y_1}^Yg_{YL}^N
K_{\hat{L}}^-\mu_{Y_1}^-\mu_{X_1}^+K_{\hat{X}_2}^+\otimes\K_{\hat{L}}^-\nu_{Y_2}^-\K_{\hat{Y}_1}^-\nu_{X_2}^+\\
&\quad\quad\quad\quad\quad\quad\quad\quad\quad\quad\quad\quad\quad(y_0=\lr{\hat{L},\hat{M}-\hat{N}}+\lr{X_1,X_2}+\lr{Y_2,Y_1})\\
&=\sum\limits_{[L],[X_1],[X_2],[Y_1],[Y_2]}v^{y_1}a_{X_1}a_{X_2}a_{Y_1}a_{Y_2}a_Lg_{LX_1X_2}^Mg_{Y_2Y_1L}^N
K_{\hat{L}}^-\mu_{Y_1}^-\mu_{X_1}^+K_{\hat{X}_2}^+\otimes\K_{\hat{Y}_1+\hat{L}}^-\nu_{Y_2}^-\nu_{X_2}^+\\
&\quad\quad\quad\quad\quad\quad\quad(y_1=y_0-(Y_1,Y_2)=\lr{\hat{L},\hat{M}-\hat{N}}+\lr{X_1,X_2}-\lr{Y_1,Y_2}).
\end{flalign*}
By (\ref{b5}), \begin{flalign*}\nu_{Y_2}^-\nu_{X_2}^+&=\sum\limits_{[A],[B]}v^{\lr{\hat{Y}_2-\hat{B},\hat{B}-\hat{A}}}\gamma_{Y_2X_2}^{BA}\K_{\hat{Y}_2-\hat{B}}^+\nu_A^+\nu_B^-
\\&=\sum\limits_{[A],[B],[C]}v^{\lr{\hat{C},\hat{B}-\hat{A}}}\frac{a_Aa_Ba_C}{a_{X_2}a_{Y_2}}g_{CB}^{Y_2}g_{AC}^{X_2}\K_{\hat{C}}^+\nu_A^+\nu_B^-.\end{flalign*}
\begin{flalign*}
&\text{Thus},~~\LHS=\\&\sum\limits_{[L],[X_1],[Y_1],[A],[B],[C]}v^{y_2}a_La_{X_1}a_{A}a_{C}a_{B}a_{Y_1}g_{LX_1AC}^Mg_{CBY_1L}^N
K_{\hat{L}}^-\mu_{Y_1}^-\mu_{X_1}^+K_{\hat{A}+\hat{C}}^+\otimes\K_{\hat{Y}_1+\hat{L}}^-\K_{\hat{C}}^+\nu_{A}^+\nu_{B}^-\\
&\quad(y_2=y_1+\lr{\hat{C},\hat{B}-\hat{A}}=\lr{\hat{L},\hat{M}-\hat{N}}+\lr{\hat{X}_1,\hat{A}+\hat{C}}-\lr{\hat{Y}_1,\hat{B}+\hat{C}}+
\lr{\hat{C},\hat{B}-\hat{A}})\\
&=\sum\limits_{[L],[X_1],[Y_1],[A],[B],[C]}v^{y_3}a_La_{X_1}a_{A}a_{C}a_{B}a_{Y_1}g_{LX_1AC}^Mg_{CBY_1L}^N
K_{\hat{L}}^-\mu_{Y_1}^-\mu_{X_1}^+K_{\hat{A}+\hat{C}}^+\otimes\K_{\hat{C}}^+\nu_{A}^+\nu_{B}^-\K_{\hat{Y}_1+\hat{L}}^-\\
&\quad(y_3=\lr{\hat{L},\hat{M}-\hat{N}}+\lr{\hat{X}_1,\hat{A}+\hat{C}}-\lr{\hat{Y}_1,\hat{B}+\hat{C}}+
\lr{\hat{C},\hat{B}-\hat{A}}+(\hat{L}+\hat{Y}_1,\hat{B}+\hat{C})).
\end{flalign*}
On the other hand,
\begin{flalign*}
&\RHS:=\sum\limits_{[X],[Y],[L]}v^{\lr{\hat{L},\hat{N}-\hat{M}}}a_{X}a_{Y}a_{L}g_{XL}^Mg_{LY}^NI(\sK_{\hat{L}}^+)I(\omega_{X}^+)I(\omega_{Y}^-)\\
&=\sum\limits_{{\tiny\begin{array}{cc}
&[X],[Y],[L],\\ [-0.6ex]
&[X_1],[X_2],[Y_1],[Y_2]\end{array}}}v^{z_0}a_{X_1}a_{X_2}a_{Y_1}a_{Y_2}a_Lg_{X_1X_2}^Xg_{XL}^Mg_{LY}^Ng_{Y_2Y_1}^Y
K_{\hat{L}}^+\mu_{X_1}^+K_{\hat{X}_2}^+\mu_{Y_1}^-\otimes\K_{\hat{L}}^+\nu_{X_2}^+\nu_{Y_2}^-\K_{\hat{Y}_1}^-\\
&\quad\quad\quad\quad\quad\quad\quad\quad\quad\quad(z_0=\lr{\hat{L},\hat{N}-\hat{M}}+\lr{X_1,X_2}+\lr{Y_2,Y_1})\\
&=\sum\limits_{[L],[X_1],[X_2],[Y_1],[Y_2]}v^{z_1}a_{X_1}a_{X_2}a_{Y_1}a_{Y_2}a_Lg_{X_1X_2L}^Mg_{LY_2Y_1}^N
K_{\hat{X}_2+\hat{L}}^+\mu_{X_1}^+\mu_{Y_1}^-\otimes\K_{\hat{L}}^+\nu_{X_2}^+\nu_{Y_2}^-\K_{\hat{Y}_1}^-\\
&\quad\quad(z_1=z_0-(X_1,X_2)=\lr{\hat{L},\hat{N}-\hat{M}}+\lr{Y_2,Y_1}-\lr{X_2,X_1}).
\end{flalign*}
By (\ref{a5}),
\begin{flalign*}
\mu_{X_1}^+\mu_{Y_1}^-&=\sum\limits_{[A],[B]}v^{\lr{\hat{Y}_1-\hat{B},\hat{A}-\hat{B}}}\gamma_{X_1Y_1}^{AB}K_{\hat{Y}_1-\hat{B}}^-\mu_B^-\mu_A^+\\
&=\sum\limits_{[A],[B],[C]}v^{\lr{\hat{C},\hat{A}-\hat{B}}}\frac{a_Aa_Ba_C}{a_{X_1}a_{Y_1}}g_{CA}^{X_1}g_{BC}^{Y_1}K_{\hat{C}}^-\mu_B^-\mu_A^+.
\end{flalign*}
\begin{flalign*}
&\text{Thus},~~\RHS=\\&\sum\limits_{[L],[X_2],[Y_2],[A],[B],[C]}v^{z_2}a_{C}a_{A}a_{X_2}a_La_{Y_2}a_{B}g_{CAX_2L}^Mg_{LY_2BC}^N
K_{\hat{X}_2+\hat{L}}^+K_{\hat{C}}^-\mu_{B}^-\mu_{A}^+\otimes\K_{\hat{L}}^+\nu_{X_2}^+\nu_{Y_2}^-\K_{\hat{B}+\hat{C}}^-\end{flalign*}
\begin{flalign*}
(z_2=z_1+\lr{\hat{C},\hat{A}-\hat{B}}=\lr{\hat{L},\hat{N}-\hat{M}}+\lr{\hat{Y}_2,\hat{B}+\hat{C}}-\lr{\hat{X}_2,\hat{A}+\hat{C}}+
\lr{\hat{C},\hat{A}-\hat{B}})\end{flalign*}
\begin{flalign*}
=\sum\limits_{[L],[X_2],[Y_2],[A],[B],[C]}v^{z_3}a_{C}a_{A}a_{X_2}a_La_{Y_2}a_{B}g_{CAX_2L}^Mg_{LY_2BC}^N
K_{\hat{C}}^-\mu_{B}^-\mu_{A}^+K_{\hat{X}_2+\hat{L}}^+\otimes\K_{\hat{L}}^+\nu_{X_2}^+\nu_{Y_2}^-\K_{\hat{B}+\hat{C}}^-\end{flalign*}
\begin{flalign*}
(z_3=\lr{\hat{L},\hat{N}-\hat{M}}+\lr{\hat{Y}_2,\hat{B}+\hat{C}}-\lr{\hat{X}_2,\hat{A}+\hat{C}}+
\lr{\hat{C},\hat{A}-\hat{B}}+(\hat{L}+\hat{X}_2,\hat{A}+\hat{C})).
\end{flalign*}
Identifying $L,X_1,A,C,B,Y_1$ in $\LHS$ with $C,A,X_2,L,Y_2,B$ in $\RHS$, respectively, we obtain that
$y_3=\lr{\hat{C},\hat{M}-\hat{N}}+\lr{\hat{A},\hat{X}_2+\hat{L}}-\lr{\hat{B},\hat{Y}_2+\hat{L}}+
\lr{\hat{L},\hat{Y}_2-\hat{X}_2}+(\hat{B}+\hat{C},\hat{Y}_2+\hat{L}).$
Noting that in $\RHS$ $\hat{M}-\hat{N}=\hat{X}-\hat{Y}=(\hat{X}_1-\hat{Y}_1)+(\hat{X}_2-\hat{Y}_2)=
(\hat{A}-\hat{B})+(\hat{X}_2-\hat{Y}_2)$, we have that
\begin{flalign*}y_3&=\lr{\hat{C},\hat{A}-\hat{B}}+\lr{\hat{C},\hat{X}_2}-\lr{\hat{C},\hat{Y}_2}+\lr{\hat{A},\hat{X}_2}+\lr{\hat{A},\hat{L}}
-\lr{\hat{B},\hat{Y}_2+\hat{L}}+\lr{\hat{L},\hat{Y}_2-\hat{X}_2}\\
&\quad+(\hat{C},\hat{L})+\lr{\hat{C},\hat{Y}_2}+\lr{\hat{Y}_2,\hat{C}}
+\lr{\hat{B},\hat{Y}_2+\hat{L}}+\lr{\hat{Y}_2,\hat{B}}+\lr{\hat{L},\hat{B}}\\
&=\lr{\hat{C},\hat{A}-\hat{B}}+\lr{\hat{A}+\hat{C},\hat{X}_2}+\lr{\hat{A},\hat{L}}+\lr{\hat{L},\hat{Y}_2-\hat{X}_2}+(\hat{C},\hat{L})
+\lr{\hat{Y}_2,\hat{B}+\hat{C}}+\lr{\hat{L},\hat{B}}
\end{flalign*}
and
\begin{flalign*}
z_3&=\lr{\hat{L},\hat{B}}-\lr{\hat{L},\hat{A}}+\lr{\hat{L},\hat{Y}_2-\hat{X}_2}+\lr{\hat{Y}_2,\hat{B}+\hat{C}}-
\lr{\hat{X}_2,\hat{A}+\hat{C}}+\lr{\hat{C},\hat{A}-\hat{B}}\\
&\quad+(\hat{C},\hat{L})+\lr{\hat{L},\hat{A}}+\lr{\hat{A},\hat{L}}+
\lr{\hat{X}_2,\hat{A}+\hat{C}}+\lr{\hat{A}+\hat{C},\hat{X}_2}\\
&=\lr{\hat{L},\hat{B}}+\lr{\hat{L},\hat{Y}_2-\hat{X}_2}+\lr{\hat{Y}_2,\hat{B}+\hat{C}}+\lr{\hat{C},\hat{A}-\hat{B}}
+(\hat{C},\hat{L})+\lr{\hat{A},\hat{L}}+\lr{\hat{A}+\hat{C},\hat{X}_2}\\
&=y_3.
\end{flalign*}
Hence, $\LHS=\RHS$, and we have proved that $I$ is a homomorphism of algebras.

Since $D(\A)\cong H^+(\A)\otimes H^-(\A)$ as a vector space, and the restriction of $I$ to the positive (negative) part is injective, we conclude that $I$ is injective. Therefore, we complete the proof.
\end{proof}

\section{Applications}
In this section, we apply Theorem \ref{zyjg} to Bridgeland Hall algebras of $m$-cyclic complexes and derived Hall algebras.

\subsection{Bridgeland Hall algebras}
Assume that $\A$ has enough projectives, the Bridgeland Hall algebra of $2$-cyclic complexes of $\A$ was introduced in \cite{Br}. Inspired by the work of Bridgeland, for each nonnegative integer $m\neq1$, Chen and Deng \cite{ChenD} introduced the Bridgeland Hall algebra $\mathcal {D}\mathcal {H}_m(\A)$ of $m$-cyclic complexes.
For $m=0$ or $m>2$, we recall the algebra structure of $\mathcal {D}\mathcal {H}_m(\A)$ by \cite{ZHC2} as follows:

\begin{proposition}\label{BR} {\rm(\cite{ZHC2})}
Let $m=0$ or $m>2$. Then
$\mathcal {D}\mathcal {H}_m(\A)$ is an associative and unital $\mathbb{C}$-algebra generated by the elements in $\{e_{M,i}~|~[M]\in\Iso(\A),~i\in \mathbb{Z}_m\}$ and $\{K_{\alpha,i}~|~\alpha\in K(\A),~i\in \mathbb{Z}_m\}$, and the following relations:
\begin{flalign}
&{K_{\alpha,i}} {K_{\beta,i}}={K_{\alpha+\beta,i}},~~~~
{K_{\alpha,i}} {K_{\beta,j}}=\begin{cases}
v^{( \alpha,\,\beta)}{K_{\beta,j}} {K_{\alpha,i}} \quad &\text{$i=j+1$},\\
v^{-( \alpha,\,\beta)}{K_{\beta,j}} {K_{\alpha,i}} \quad &\text{$i=m-1+j$},\\
{K_{\beta,j}} {K_{\alpha,i}} & {\text{otherwise};}
\end{cases}\\
&{K_{\alpha,i}}e_{M,j}=\begin{cases}
v^{(\alpha,\,\hat{M})} e_{M,j}{K_{\alpha,i}} \quad &\text{$i=j$},\\
v^{-(\alpha,\,\hat{M})} e_{M,j}{K_{\alpha,i}} \quad &\text{$i=m-1+j$},\\
e_{M,j}{K_{\alpha,i}} & {\text{otherwise};}
\end{cases}\\
&e_{M,i}e_{N,i}=\sum\limits_{[L]}v^{\lr{M,\,N}}g_{MN}^Le_{L,i};\\
&e_{M,i+1}e_{N,i}=\sum\limits_{[X],[Y]}v^{\lr{\hat{M}-\hat{X},\,\hat{X}-\hat{Y}}}\gamma_{MN}^{XY}K_{\hat{M}-\hat{X},i}e_{Y,i}e_{X,i+1};\\
&e_{M,i}e_{N,j}=e_{N,j}e_{M,i},~~i-j\neq0, 1~{\rm or}~m-1.
\end{flalign}
\end{proposition}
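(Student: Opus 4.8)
The plan is to realize $\mathcal{DH}_m(\A)$ concretely and then verify the presentation by direct Hall-number computations in the category $\mathcal{C}_m(\mathcal{P})$ of $\mathbb{Z}_m$-graded complexes of projective objects of $\A$, in the spirit of Bridgeland \cite{Br} and Chen--Deng \cite{ChenD}. Recall that $\mathcal{DH}_m(\A)$ is obtained from the Ringel--Hall algebra of $\mathcal{C}_m(\mathcal{P})$, twisted by the Euler form of the complex category, by localizing at the classes of the contractible complexes and then reducing so that the acyclic part collapses to a piece recording only classes in $K(\A)$ position by position. For $\alpha=\hat{P}-\hat{Q}\in K(\A)$ with $P,Q$ projective, let $K_{\alpha,i}$ be the image of the contractible complex in positions $i-1,i$ built from $P$ and $Q$; for $M\in\A$ with minimal projective presentation $0\to P^1\to P^0\to M\to 0$, let $C_M^{(i)}$ be the stalk complex $P^1\to P^0$ in positions $i-1,i$ (well defined since $m=0$ or $m>2$), and let $e_{M,i}$ be its image. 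These elements generate $\mathcal{DH}_m(\A)$: over a hereditary category every $\mathbb{Z}_m$-graded complex of projectives is isomorphic to the direct sum of the projective presentations of its homologies, placed in the appropriate positions, together with contractible complexes, so every isoclass in $\mathcal{C}_m(\mathcal{P})$ becomes, after localization and reduction, a scalar times a monomial $K_\bullet\,e_{L_{m-1},m-1}\cdots e_{L_0,0}$.

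Next I would dispatch the relations. The $K$--$K$ and $K$--$e$ relations, with their $\pm$ signs and ``otherwise'' cases, follow by unwinding the Euler-form twist in $\mathcal{C}_m(\mathcal{P})$: the exponents record in which shared homological degree (if any) a contractible complex and another complex meet, and whether that degree is the source or target of the contractible's differential. For $e_{M,i}e_{N,i}$ one computes the Hall number $g^{Z}_{C_M^{(i)}\,C_N^{(i)}}$: since $\A$ is hereditary, an extension of $C_N^{(i)}$ by $C_M^{(i)}$ is again a two-term complex of projectives in positions $i-1,i$, and the long exact homology sequence together with the uniqueness of projective presentations up to contractible summands identifies it with $C_L^{(i)}$ for an extension $L$ of $M$ by $N$ in $\A$ plus a contractible summand killed by the reduction; bookkeeping of automorphism groups and of the twist gives $e_{M,i}e_{N,i}=\sum_{[L]}v^{\lr{M,N}}g_{MN}^L e_{L,i}$. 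For the commutation relation with $i-j\notin\{0,1,m-1\}$, the supports $\{i-1,i\}$ and $\{j-1,j\}$ are disjoint, and any off-diagonal differential between $C_M^{(i)}$ and $C_N^{(j)}$ is forced to vanish because the differentials of the two stalk complexes are injective; hence $\Hom$ and $\Ext^1$ between them vanish in $\mathcal{C}_m(\mathcal{P})$ in both directions, and their Hall products commute.

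The main obstacle is the exchange relation $e_{M,i+1}e_{N,i}=\sum_{[X],[Y]}v^{\lr{\hat{M}-\hat{X},\hat{X}-\hat{Y}}}\gamma_{MN}^{XY}K_{\hat{M}-\hat{X},i}\,e_{Y,i}\,e_{X,i+1}$, where $C_M^{(i+1)}$ and $C_N^{(i)}$ share exactly one position and interact both through $\Hom$ and through $\Ext^1$ in $\A$. Here I would enumerate the complexes $Z$ occurring in the Hall product $[C_M^{(i+1)}]\diamond[C_N^{(i)}]$, decompose each $Z$ as a contractible part (contributing the factor $K_{\hat{M}-\hat{X},i}$) plus a residual complex that is again a sum of two stalk complexes $C_X^{(i+1)}$ and $C_Y^{(i)}$, and match multiplicities; converting the sum over $Z$ into the stated sum over $[X],[Y]$ with coefficient $\gamma_{MN}^{XY}$ is precisely an instance of Green's formula applied to the relevant $\Hom$ and $\Ext^1$ spaces in $\A$. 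The delicate step is tracking the several Euler-form twists from $\mathcal{C}_m(\mathcal{P})$ and from the twisted multiplication so that the total exponent collapses to $\lr{\hat{M}-\hat{X},\hat{X}-\hat{Y}}$; the computation should come out formally parallel to relation (\ref{a5}). Finally, to see that these relations present $\mathcal{DH}_m(\A)$ with nothing left over, I would use the relations just proved to rewrite any word in the generators as a linear combination of the monomials $K_\bullet\,e_{L_{m-1},m-1}\cdots e_{L_0,0}$, and compare with the explicit basis of $\mathcal{DH}_m(\A)$ indexed by the same data, concluding that the canonical surjection from the abstractly presented algebra onto $\mathcal{DH}_m(\A)$ is injective.
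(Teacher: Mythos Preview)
The paper does not prove this proposition: it is stated as a quotation of \cite{ZHC2}, with no argument given. There is therefore nothing in the present paper to compare your proposal against; what you have written is a reasonable reconstruction of the kind of proof one expects to find in \cite{ZHC2} (modelled on \cite{Br} and \cite{ChenD}).

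On the substance of your sketch, two remarks. First, your claim that the exchange relation $e_{M,i+1}e_{N,i}=\sum v^{\lr{\hat{M}-\hat{X},\hat{X}-\hat{Y}}}\gamma_{MN}^{XY}K_{\hat{M}-\hat{X},i}e_{Y,i}e_{X,i+1}$ ``is precisely an instance of Green's formula'' overstates the matter: in the Bridgeland-type setting this relation is obtained by a direct Hall-number count in $\mathcal{C}_m(\mathcal{P})$, decomposing each middle term $Z$ via its homology and using the Riedtmann--Peng formula to convert extension counts into filtration counts; Green's formula is what makes the \emph{comultiplication} on $\mathfrak{H}_v^e(\A)$ an algebra map and is not invoked here. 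The identity you need is closer to the associativity of Hall numbers together with the observation that, over a hereditary $\A$, every complex of projectives splits as its minimal part plus contractibles. Second, in the commutation step for $i-j\notin\{0,1,m-1\}$ you correctly argue that the injectivity of the differentials forces any connecting map to vanish, but you should also note that the Euler-form twist in $\mathcal{C}_m(\mathcal{P})$ is zero in both directions (since $\Hom$ and $\Ext^1$ both vanish), so that the twisted products, not just the untwisted ones, coincide.
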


\begin{corollary}
Let $m=0$ or $m>2$.
Then for each $i\in\mathbb{Z}_m$,

$(1)$~there exists an embedding of algebras $\kappa_i:HD(\A)\hookrightarrow\mathcal {D}\mathcal {H}_m(\A)$ defined on generators by
$$K_{\alpha}^+\mapsto K_{\alpha,i+1},\quad K_{\alpha}^-\mapsto K_{\alpha,i},\quad \mu_M^+\mapsto e_{M,i+1},\quad \mu_M^-\mapsto e_{M,i};$$

$(2)$~there exists an embedding of algebras $\check{\kappa}_i:\check{H}D(\A)\hookrightarrow\mathcal {D}\mathcal {H}_m(\A)$ defined on generators by
$$\K_{\alpha}^+\mapsto K_{\alpha,i},\quad\K_{\alpha}^-\mapsto K_{\alpha,i+1},\quad \nu_M^+\mapsto e_{M,i},\quad \nu_M^-\mapsto e_{M,i+1}.$$
\end{corollary}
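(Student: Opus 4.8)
The plan is to read the Corollary as a direct comparison of presentations. Since relations (\ref{a1})--(\ref{a5}) are a presentation of $HD(\A)$ by generators and relations, and the analogous displayed relations (ending with (\ref{b5})) are a presentation of $\check{H}D(\A)$, to show that $\kappa_i$ (resp.\ $\check\kappa_i$) extends to an algebra homomorphism it suffices to verify that the prescribed images of the generators satisfy, inside $\mathcal{DH}_m(\A)$, the images of those defining relations; these will all turn out to be instances of the relations in Proposition~\ref{BR}. Injectivity will then follow from a PBW-type basis argument.

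For $\kappa_i$, under $\mu_M^+\mapsto e_{M,i+1}$, $\mu_M^-\mapsto e_{M,i}$, $K_\alpha^+\mapsto K_{\alpha,i+1}$, $K_\alpha^-\mapsto K_{\alpha,i}$ the dictionary is: (\ref{a1}) becomes the two monomial relations of Proposition~\ref{BR} with colours $i+1$ and $i$; (\ref{a2}) becomes the $i=j$ case of the $K$--$e$ commutation; (\ref{a3}) becomes the $KK$-relations for the colours $i$ and $i+1$; (\ref{a4}) becomes the $K$--$e$ commutation for two distinct colours; and (\ref{a5}) becomes the mixed relation $e_{M,i+1}e_{N,i}=\sum_{[X],[Y]}v^{\lr{\hat M-\hat X,\hat X-\hat Y}}\gamma_{MN}^{XY}K_{\hat M-\hat X,i}e_{Y,i}e_{X,i+1}$. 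Only two points are not purely mechanical. First, in (\ref{a5}) the summand vanishes unless $g_{LX}^Mg_{YL}^N\neq 0$, which forces $\hat M-\hat X=\hat L=\hat N-\hat Y$, so the exponent $\lr{\hat M-\hat X,\hat X-\hat Y}$ and the subscript $\hat M-\hat X$ of Proposition~\ref{BR} coincide with $\lr{\hat N-\hat Y,\hat X-\hat Y}$ and $\hat N-\hat Y$ in (\ref{a5}). Second, the colours $i$ and $i+1$ differ by $1$, so one must decide which branch of the $K$--$e$ and $KK$ rules applies; here one uses the hypothesis $m=0$ or $m>2$, i.e.\ $1\not\equiv m-1\pmod m$, which is exactly what makes $K_{\alpha,i}$ commute with $e_{M,i+1}$ only up to $v^{-(\alpha,\hat M)}$ while $K_{\alpha,i}$ commutes with $e_{M,i}$, matching (\ref{a4}) (for $m=2$ these branches would merge and one would land in the Drinfeld-double relations instead). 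The extra relations $e_{M,j}e_{N,k}=e_{N,k}e_{M,j}$ for $j-k\neq 0,1,m-1$ impose nothing, since only the colours $i,i+1$ appear. The check for $\check\kappa_i$ is entirely parallel with $\nu_M^+\mapsto e_{M,i}$, $\nu_M^-\mapsto e_{M,i+1}$, $\K_\alpha^+\mapsto K_{\alpha,i}$, $\K_\alpha^-\mapsto K_{\alpha,i+1}$: the sign changes in the $\K$-relations of $\check{H}D(\A)$ are produced by the ``$i=m-1+j$'' branch, and after relabelling $M\leftrightarrow N$, $X\leftrightarrow Y$ the last relation (\ref{b5}) becomes the mixed relation of Proposition~\ref{BR} verbatim.

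For injectivity, I would use that relations (\ref{a1})--(\ref{a5}) let one rewrite any element of $HD(\A)$ as a $\mathbb{C}$-linear combination of the ordered monomials $(\mu_M^-K_\alpha^-)(\mu_N^+K_\beta^+)$, and that these form a basis of $HD(\A)$ (the standard fact that a Heisenberg double is $\cong H^-(\A)\otimes H^+(\A)$ as a vector space). Their images $(e_{M,i}K_{\alpha,i})(e_{N,i+1}K_{\beta,i+1})$ are, by the PBW-type basis of $\mathcal{DH}_m(\A)$ established in \cite{ZHC2} (see also \cite{ChenD}), part of a basis of $\mathcal{DH}_m(\A)$, hence linearly independent, so $\kappa_i$ is injective; the same argument with $(e_{M,i+1}K_{\alpha,i+1})(e_{N,i}K_{\beta,i})$ handles $\check\kappa_i$. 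I expect the relation-checking to be routine bookkeeping with exponents and with arithmetic modulo $m$; the only genuinely load-bearing use of the hypothesis $m=0$ or $m>2$ is the branch analysis above, and the only non-formal ingredient is the known basis of $\mathcal{DH}_m(\A)$ needed for injectivity (absent that, one could instead produce a left inverse of $\kappa_i$ on generators, but the basis argument is shorter).
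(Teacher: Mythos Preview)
Your proposal is correct and follows the same route as the paper: check that the defining relations of $HD(\A)$ and $\check HD(\A)$ map to instances of the relations in Proposition~\ref{BR}, then invoke the PBW-type basis of $\mathcal{DH}_m(\A)$ from \cite{ZHC2} (the paper cites specifically \cite[Proposition~2.7]{ZHC2}) for injectivity. One small slip: in your discussion of (\ref{a4}) you write that ``$K_{\alpha,i}$ commutes with $e_{M,i}$,'' but what you need (and what Proposition~\ref{BR} gives when $m=0$ or $m>2$) is that $K_{\alpha,i+1}$ commutes with $e_{M,i}$.
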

\begin{proof}
By Proposition \ref{BR} the defining relations of $HD(\A)$ and $\check{H}D(\A)$ are preserved under $\kappa_i$ and $\check{\kappa}_i$, respectively, we obtain that $\kappa_i$ and $\check{\kappa}_i$ are homomorphisms of algebras.
According to \cite[Proposition 2.7]{ZHC2}, we conclude that they are injective.
\end{proof}

As a first application of Theorem \ref{zyjg}, we have the following
\begin{theorem}\label{third}
Let $m=0$ or $m>2$. Then for each $i\in\mathbb{Z}_m$, there exists an embedding of algebras $\psi_i:D(\A)\hookrightarrow \mathcal {D}\mathcal {H}_m(\A)\otimes\mathcal {D}\mathcal {H}_m(\A)$ defined on generators by
\begin{flalign*}
\sK_\alpha^+\mapsto K_{\alpha,i+1}\otimes K_{\alpha,i},\quad \omega_M^+\mapsto\sum\limits_{[M_1],[M_2]}v^{\lr{M_1,M_2}}
\frac{a_{M_1}a_{M_2}}{a_M}g_{M_1M_2}^Me_{M_1,i+1}K_{\hat{M}_2,i+1}\otimes e_{M_2,i},
\end{flalign*} and
\begin{flalign*}
\sK_\alpha^-\mapsto K_{\alpha,i}\otimes K_{\alpha,i+1},\quad \omega_M^-\mapsto\sum\limits_{[M_1],[M_2]}v^{\lr{M_2,M_1}}
\frac{a_{M_1}a_{M_2}}{a_M}g_{M_2M_1}^Me_{M_1,i}\otimes e_{M_2,i+1}K_{\hat{M}_1,i+1}.
\end{flalign*}
\end{theorem}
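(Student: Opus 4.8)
The plan is to realize $\psi_i$ as the composition $(\kappa_i\otimes\check\kappa_i)\circ I$, where $I\colon D(\A)\hookrightarrow HD(\A)\otimes\check{H}D(\A)$ is the embedding of Theorem \ref{zyjg} and $\kappa_i\colon HD(\A)\hookrightarrow\mathcal{D}\mathcal{H}_m(\A)$, $\check\kappa_i\colon\check{H}D(\A)\hookrightarrow\mathcal{D}\mathcal{H}_m(\A)$ are the embeddings furnished by the corollary above. Since $\kappa_i$ and $\check\kappa_i$ are algebra homomorphisms, their tensor product $\kappa_i\otimes\check\kappa_i$ is an algebra homomorphism $HD(\A)\otimes\check{H}D(\A)\to\mathcal{D}\mathcal{H}_m(\A)\otimes\mathcal{D}\mathcal{H}_m(\A)$ of tensor-product algebras; composing with $I$ therefore yields an algebra homomorphism $\psi_i:=(\kappa_i\otimes\check\kappa_i)\circ I$, and all that remains is to identify it on generators and to check injectivity.

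First I would verify that $\psi_i$ acts by the stated formulas, which is a direct substitution. For the toral generators, $I(\sK_\alpha^+)=K_\alpha^+\otimes\K_\alpha^+\mapsto K_{\alpha,i+1}\otimes K_{\alpha,i}$ and $I(\sK_\alpha^-)=K_\alpha^-\otimes\K_\alpha^-\mapsto K_{\alpha,i}\otimes K_{\alpha,i+1}$, exactly as claimed. For $\omega_M^+$, applying $\kappa_i\otimes\check\kappa_i$ to $I(\omega_M^+)=\sum_{[M_1],[M_2]}v^{\lr{M_1,M_2}}\frac{a_{M_1}a_{M_2}}{a_M}g_{M_1M_2}^M\,\mu_{M_1}^+K_{\hat{M}_2}^+\otimes\nu_{M_2}^+$ sends $\mu_{M_1}^+K_{\hat{M}_2}^+\mapsto e_{M_1,i+1}K_{\hat{M}_2,i+1}$ and $\nu_{M_2}^+\mapsto e_{M_2,i}$, producing precisely the displayed expression for $\psi_i(\omega_M^+)$; and for $\omega_M^-$ one uses $\kappa_i(\mu_{M_1}^-)=e_{M_1,i}$ together with $\check\kappa_i(\nu_{M_2}^-\K_{\hat{M}_1}^-)=e_{M_2,i+1}K_{\hat{M}_1,i+1}$ to obtain the stated formula. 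The only delicate point in this step is keeping track of which tensor slot carries the index shift $i\mapsto i+1$; this is pure bookkeeping, and the matching is forced by the defining formulas of $\kappa_i$ and $\check\kappa_i$.

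Finally, for injectivity I would note that $\psi_i$ is a composition of injective maps: $I$ is injective by Theorem \ref{zyjg}, $\kappa_i$ and $\check\kappa_i$ are injective by the corollary (which rests on \cite[Proposition 2.7]{ZHC2} and requires $m=0$ or $m>2$, as does Proposition \ref{BR}), and the tensor product over $\mathbb{C}$ of two injective linear maps of $\mathbb{C}$-vector spaces is again injective, so $\kappa_i\otimes\check\kappa_i$ is injective. Hence $\psi_i=(\kappa_i\otimes\check\kappa_i)\circ I$ is an injective homomorphism of algebras. I do not expect a genuine obstacle here: the substantive content—verifying the relations \eqref{d1}--\eqref{d5} via Green's formula—is already contained in Theorem \ref{zyjg}, and the present statement is obtained by composing it with the structural embeddings of $HD(\A)$ and $\check{H}D(\A)$ into $\mathcal{D}\mathcal{H}_m(\A)$.
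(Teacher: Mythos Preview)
Your proposal is correct and follows essentially the same approach as the paper: the paper also defines $\psi_i$ as the composite $(\kappa_i\otimes\check\kappa_i)\circ I$, recording this in a single commutative diagram and concluding immediately. Your write-up simply makes explicit the verification on generators and the injectivity argument that the paper leaves implicit.
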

\begin{proof}
For each $i\in\mathbb{Z}_m$, by the following commutative diagram
$$\xymatrix{D(\A)\ar@{^{(}->}[r]^-I\ar[rd]^-{\psi_i}&HD(\A)\otimes\check{H}D(\A)\ar@{^{(}->}[d]^-{{\kappa}_i\otimes\check{\kappa}_{i}}\\
&\mathcal {D}\mathcal {H}_m(\A)\otimes\mathcal {D}\mathcal {H}_m(\A)}$$
we complete the proof.
\end{proof}

\begin{remark}
As mentioned in Introduction, there is an isomorphism $\rho:D(\A)\to\mathcal {D}\mathcal {H}_2(\A)$, which is defined on generators by
\begin{flalign*}
\omega_M^+\mapsto \frac{E_M}{a_M}, \omega_M^-\mapsto \frac{F_M}{a_M},
\sK_\alpha^+\mapsto K_\alpha, \sK_\alpha^-\mapsto K_\alpha^*,
\end{flalign*}
where the notations $E_M$, $F_M$, $K_\alpha$ and $K_\alpha^*$ are the same as those in \cite{ZHC}. Hence, Theorem \ref{third} establishes a relation between the Bridgeland Hall algebra of $2$-cyclic complexes and that of $m$-cyclic complexes.

%
\end{remark}

\subsection{Derived Hall algebras}
The derived Hall algebra $\mathcal {D}\mathcal {H}(\A)$ of the bounded derived category of $\A$ was introduced in \cite{Toen} (see also \cite{XiaoXu}).
\begin{proposition}\label{DH} {\rm(\cite{Toen})}
$\mathcal {D}\mathcal {H}(\A)$ is an associative and unital $\mathbb{C}$-algebra generated by the elements in $\{Z_{M}^{[i]}~|~[M]\in\Iso(\A),~i\in \mathbb{Z}\}$ and the following relations:
\begin{flalign}
&Z_M^{[i]}Z_N^{[i]}=\sum\limits_{[L]}g_{MN}^LZ_L^{[i]};\\
&Z_M^{[i+1]}Z_N^{[i]}=\sum\limits_{[X],[Y]}q^{-\lr{{Y},{X}}}\gamma_{MN}^{XY}Z_Y^{[i]}Z_X^{[i+1]};\\
&Z_M^{[i]}Z_N^{[j]}=q^{(-1)^{i-j}\lr{N,M}}Z_N^{[j]}Z_M^{[i]}, \quad i-j>1.
\end{flalign}
\end{proposition}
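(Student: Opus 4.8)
The plan is to read off the presentation from To\"{e}n's construction of the derived Hall algebra \cite{Toen} (equivalently the Xiao--Xu construction \cite{XiaoXu}), specialized to the bounded derived category $\mathcal{D}^{b}(\A)$. Recall that $\mathcal{D}\mathcal{H}(\A):=\mathcal{D}\mathcal{H}(\mathcal{D}^{b}(\A))$ has a $\mathbb{C}$-basis $\{u_{X}\}_{X\in\Iso(\mathcal{D}^{b}(\A))}$ with $u_{X}\diamond u_{Y}=\sum_{[Z]}g^{Z}_{XY}u_{Z}$, where $g^{Z}_{XY}$ is built from the (automorphism-weighted) count of triangles $Y\to Z\to X\to Y[1]$ together with the homological factor $\prod_{k\geq1}|\Hom_{\mathcal{D}^{b}(\A)}(X,Y[-k])|^{(-1)^{k}}$. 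The essential input is that $\A$ is hereditary: every object of $\mathcal{D}^{b}(\A)$ is isomorphic to $\bigoplus_{i}M_{i}[i]$ with $M_{i}\in\A$ and almost all $M_{i}=0$, and $\Hom_{\mathcal{D}^{b}(\A)}(M[i],N[j])=\Ext^{j-i}_{\A}(M,N)$ vanishes unless $j-i\in\{0,1\}$. Thus $\Iso(\mathcal{D}^{b}(\A))$ is parametrized by finite families $(M_{i})_{i}\in\Iso(\A)$, and one puts $Z^{[i]}_{M}:=u_{M[i]}$.

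First I would verify the three families of relations by direct computation in $\mathcal{D}\mathcal{H}(\A)$. For the first relation one restricts to a fixed degree $i$: the triangles $N[i]\to L[i]\to M[i]\to N[i+1]$ are exactly the shifts of the short exact sequences $0\to N\to L\to M\to 0$ in $\A$, the homological factor equals $1$ because $\Hom_{\mathcal{D}^{b}(\A)}(M[i],N[i][-k])=\Ext^{-k}_{\A}(M,N)=0$ for $k\geq1$, and the structure constant becomes $g^{L}_{MN}$; hence the degree-$i$ part of $\mathcal{D}\mathcal{H}(\A)$ is a copy of the (untwisted) Hall algebra $\mathfrak{H}(\A)$. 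For the third relation, when $i-j>1$ the connecting morphisms in $u_{M[i]}\diamond u_{N[j]}$ and $u_{N[j]}\diamond u_{M[i]}$ lie in $\Ext^{j+1-i}_{\A}(M,N)=0$ and $\Ext^{i+1-j}_{\A}(N,M)=0$ respectively, so in either order only the split triangle contributes and both products are scalar multiples of $u_{M[i]\oplus N[j]}$; the ratio of the two homological factors is then precisely $q^{(-1)^{i-j}\lr{N,M}}$, with $q=v^{2}$. For the second relation --- the straightening rule for two adjacent degrees --- one rotates a triangle $N[i]\to L\to M[i+1]\to N[i+1]$ to recognize $L\cong\mathrm{cone}(M[i]\xrightarrow{f}N[i])\cong\mathrm{coker}(f)[i]\oplus\ker(f)[i+1]$ for some $f\in\Hom_{\A}(M,N)$; organizing the count of such $f$ by the isoclasses $X=\ker(f)$, $Y=\mathrm{coker}(f)$ (equivalently by the image $I$, with $\hat{I}=\hat{M}-\hat{X}=\hat{N}-\hat{Y}$) through the identity governing $\sum_{[I]}a_{I}g^{M}_{IX}g^{N}_{YI}$ produces the coefficient $\gamma^{XY}_{MN}$, while rewriting $u_{Y[i]\oplus X[i+1]}$ as $u_{Y[i]}\diamond u_{X[i+1]}$ --- again a split product, hence a scalar, since $\Ext^{2}_{\A}(Y,X)=0$ --- accounts for the remaining factor $q^{-\lr{Y,X}}$.

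Next I would prove that these relations are complete. Let $\mathcal{B}$ be the abstract $\mathbb{C}$-algebra on the generators $Z^{[i]}_{M}$ subject to the three families of relations; the computations above give a surjective algebra homomorphism $\pi:\mathcal{B}\twoheadrightarrow\mathcal{D}\mathcal{H}(\A)$, $Z^{[i]}_{M}\mapsto u_{M[i]}$. Using the first relation to identify each single-degree subalgebra of $\mathcal{B}$ with $\mathfrak{H}(\A)$, the second relation to rewrite an adjacent decreasing pair $Z^{[i+1]}Z^{[i]}$ as a combination of increasing pairs $Z^{[i]}Z^{[i+1]}$, and the third relation to reorder generators whose degrees differ by more than one, every element of $\mathcal{B}$ becomes a $\mathbb{C}$-combination of ordered monomials $[M_{i_{1}}]^{[i_{1}]}\cdots[M_{i_{r}}]^{[i_{r}]}$ with $i_{1}<\cdots<i_{r}$, where $[M]^{[i]}$ denotes the image in $\mathcal{B}$ of the Hall basis element $[M]$ in degree $i$. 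For such a monomial all intermediate products are split --- the relevant connecting morphisms land in $\Ext^{\geq2}_{\A}=0$ by heredity --- so $\pi$ sends it to a nonzero scalar multiple of $u_{\bigoplus_{k}M_{i_{k}}[i_{k}]}$, and distinct ordered monomials hit distinct basis vectors of $\mathcal{D}\mathcal{H}(\A)$. Hence the ordered monomials are linearly independent in $\mathcal{B}$, $\pi$ is an isomorphism, and the presentation is complete.

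The main obstacle is the bookkeeping in the second relation: one must match the derived Hall structure constants for $u_{M[i+1]}\diamond u_{N[i]}$ --- the count of cones of maps $M\to N$ together with all the homological correction factors coming from $\Hom$ and $\Ext^{1}$ in $\A$ --- against the closed form $\sum q^{-\lr{Y,X}}\gamma^{XY}_{MN}Z^{[i]}_{Y}Z^{[i+1]}_{X}$, keeping the normalizations consistent throughout. A subsidiary point is to check that the straightening in the completeness argument terminates, which follows by inducting on the number of indecomposable summands together with the spread $i_{r}-i_{1}$ of the degrees that occur.
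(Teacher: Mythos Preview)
The paper does not prove this proposition at all: it is quoted verbatim as a result of To\"{e}n \cite{Toen} (with the generators-and-relations description made explicit for the hereditary case), so there is no proof in the paper to compare against. Your proposal, by contrast, supplies an actual argument.

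Your strategy is the standard one and is essentially correct. The three verification steps are sound: for equal degrees the homological correction factor is $1$ and one recovers the abelian Hall product; for $i-j>1$ both connecting morphisms vanish by heredity, only the split triangle contributes, and the ratio of the two homological factors is indeed $q^{(-1)^{i-j}\lr{N,M}}$; for adjacent degrees the cone of $f\colon M\to N$ splits in $\mathcal{D}^{b}(\A)$ as $\mathrm{coker}(f)\oplus\ker(f)[1]$, and the count of such $f$ with prescribed kernel $X$ and cokernel $Y$ is $\sum_{[I]}a_{I}g^{M}_{IX}g^{N}_{YI}=\tfrac{a_{M}a_{N}}{a_{X}a_{Y}}\gamma^{XY}_{MN}$, with the automorphism factors absorbed by the derived Hall normalization. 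The completeness argument via ordered monomials and the hereditary decomposition $\bigoplus_{i}M_{i}[i]$ is likewise the right idea.

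The one place to be careful is exactly where you flag it: the bookkeeping in the adjacent-degree relation. You must track consistently which of the several normalizations of the derived Hall number you are using (To\"{e}n's versus Xiao--Xu's, left versus right), since the factors $a_{M},a_{N},a_{X},a_{Y}$ and the homological product $\prod_{k\geq1}|\Hom(M[i+1],N[i][-k])|^{(-1)^{k}}$ have to combine precisely to give $q^{-\lr{Y,X}}\gamma^{XY}_{MN}$ after you also unsplit $u_{Y[i]\oplus X[i+1]}$ into $Z^{[i]}_{Y}Z^{[i+1]}_{X}$. This is a genuine computation, not a formality, and small sign or exponent errors are easy to make; but once it is done the rest of your outline goes through.
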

According to \cite{SX}, we twist the multiplication in $\mathcal {D}\mathcal {H}(\A)$ as follows:
\begin{equation}Z_M^{[i]}\ast Z_N^{[j]}=v^{(-1)^{i-j}\lr{M,N}}Z_M^{[i]}Z_N^{[j]}.\end{equation}
The \emph{twisted derived Hall algebra} $\mathcal {D}\mathcal {H}_{\tw}(\A)$ is the same vector space as $\mathcal {D}\mathcal {H}(\A)$, but with the twisted multiplication. In order to relate the modified Ringel--Hall algebra, which is isomorphic to the corresponding Bridgeland Hall algebra if $\A$ has enough projectives, to derived Hall algebra, Lin \cite{LJ} introduced the \emph{completely extended twisted derived Hall algebra} $\mathcal {D}\mathcal {H}_{\tw}^{\ce}(\A)$.
\begin{definition}(\cite{LJ})\label{wanquan}
$\mathcal {D}\mathcal {H}_{\tw}^{\ce}(\A)$ is the associative and unital $\mathbb{C}$-algebra generated by the elements in $\{Z_M^{[i]}~|~[M]\in\Iso(\A),~i\in \mathbb{Z}\}$ and $\{K_\alpha^{[i]}~|~\alpha\in K(\A),~i\in \mathbb{Z}\}$, and the following relations:
\begin{flalign}
&K_\alpha^{[i]}K_\beta^{[i]}=K_{\alpha+\beta}^{[i]},~~K_\alpha^{[i]}Z_M^{[i]}=\begin{cases}
v^{(\alpha,\,\hat{M})} Z_M^{[i]}K_\alpha^{[i]}  &\text{$i=-1,0,$}\\
Z_M^{[i]}K_\alpha^{[i]}& {\text{otherwise};}
\end{cases}\\
&K_{\alpha}^{[i+1]}K_\beta^{[i]}=v^{(\alpha,\beta)}K_\beta^{[i]}K_{\alpha}^{[i+1]},~~K_\alpha^{[i]}K_\beta^{[j]}=K_\beta^{[j]}K_\alpha^{[i]},~~|i-j|>1;\\
&K_\alpha^{[i]}Z_M^{[i+1]}=\begin{cases}
v^{-(\alpha,\,\hat{M})} Z_M^{[i+1]}K_\alpha^{[i]} &\text{$i=-1,0,$}\\
Z_M^{[i+1]}K_\alpha^{[i]}& {\text{otherwise};}\\
\end{cases}\\
&K_\alpha^{[i]}Z_M^{[i-1]}=\begin{cases}
v^{-(\alpha,\,\hat{M})} Z_M^{[i-1]}K_\alpha^{[i]} &\text{$i=-1,0,$}\\
Z_M^{[i-1]}K_\alpha^{[i]}& {\text{otherwise};}\\
\end{cases}\\
&\text{For~~any~~}|i-j|>1,~~K_\alpha^{[i]}Z_M^{[j]}=\begin{cases}
v^{(-1)^j(\alpha,\,\hat{M})} Z_M^{[j]}K_\alpha^{[i]} &\text{$i=0$~~\text{and}~~$|j|>1$,}\\
v^{(-1)^{j+1}(\alpha,\,\hat{M})} Z_M^{[j]}K_\alpha^{[i]} &\text{$i=-1$~~\text{and}~~$|j+1|>1$,}\\
Z_M^{[j]}K_\alpha^{[i]} & {\text{otherwise};}\\
\end{cases}\\
&Z_M^{[i]}Z_N^{[i]}=\sum\limits_{[L]}v^{\lr{M,N}}g_{MN}^LZ_L^{[i]}\label{x2};\\
&Z_M^{[i+1]}Z_N^{[i]}=\sum\limits_{[X],[Y]}v^{-\lr{M,N}-\lr{Y,X}}\gamma_{MN}^{XY}Z_Y^{[i]}Z_X^{[i+1]};\\
&Z_M^{[i]}Z_N^{[j]}=v^{(-1)^{i-j}(M,N)}Z_N^{[j]}Z_M^{[i]}, \quad i-j>1.
\end{flalign}
\end{definition}

\begin{remark}
In Definition \ref{wanquan}, we have employed the linear Euler form, not the multiplicative Euler form used in \cite{LJ};
$K_\alpha^{[i]}$ and $Z_M^{[i]}$ here are equal to $K_\alpha^{[-i]}$ and $Z_M^{[-i]}$ in \cite{LJ}, respectively.
\end{remark}
Now we reformulate \cite[Theorem 5.3,Corollary 5.5]{LJ} as follows:
\begin{theorem}\label{tonggou}
Assume that $\A$ has enough projectives. Then
there exists an isomorphism of algebras $\phi:\mathcal {D}\mathcal {H}_{\tw}^{\ce}(\A)\to\mathcal {D}\mathcal {H}_0(\A)$ defined on generators (with $n>0$) by
$$Z_M^{[0]}\mapsto e_{M,0},\quad K_{\alpha}^{[n]}\mapsto K_{\alpha,n},$$
$$Z_M^{[n]}\mapsto v^{n\lr{M,M}}e_{M,n}\prod\limits_{i=1}^nK_{(-1)^i\hat{M},n-i},~~\text{and}~~Z_M^{[-n]}\mapsto v^{-n\lr{M,M}}e_{M,-n}\prod\limits_{i=0}^{n-1}K_{(-1)^{i+1}\hat{M},i-n}.$$
\end{theorem}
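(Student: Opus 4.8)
The plan is to verify directly that $\phi$ respects every defining relation of $\mathcal{D}\mathcal{H}_{\tw}^{\ce}(\A)$ listed in Definition \ref{wanquan}, and then to exhibit an inverse. Since $\mathcal{D}\mathcal{H}_0(\A)$ is generated by the $e_{M,j}$ and $K_{\alpha,j}$ with $j\in\mathbb{Z}$ (Proposition \ref{BR} with $m=0$), and $\phi$ hits $e_{M,0}$, all $K_{\alpha,n}$, and the elements $v^{n\lr{M,M}}e_{M,n}\prod_i K_{(-1)^i\hat M,n-i}$ from which one solves for $e_{M,n}$ upon knowing the $K$'s, surjectivity is immediate; injectivity then follows by a dimension/basis count once $\phi$ is shown to be a well-defined algebra map, because both algebras have bases indexed by the same combinatorial data (tuples of isoclasses placed in the integer-graded components, with $K$-part a function $K(\A)\to$ integer-supported). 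So the crux is well-definedness.

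First I would record the normalization conventions: the twisting factor $v^{n\lr{M,M}}$ and the $K$-monomials $\prod_{i=1}^n K_{(-1)^i\hat M,\,n-i}$ (resp.\ $\prod_{i=0}^{n-1}K_{(-1)^{i+1}\hat M,\,i-n}$) are designed precisely so that conjugating $e_{M,n}$ (resp.\ $e_{M,-n}$) past a $K_{\alpha,j}$ in $\mathcal{D}\mathcal{H}_0(\A)$ reproduces the sign pattern $(-1)^{j}(\alpha,\hat M)$ appearing in the completely-extended algebra; I would first check all the $K$--$K$ and $K$--$Z$ commutation relations, which is bookkeeping with the relation $K_{\alpha,i}K_{\beta,j}=v^{\pm(\alpha,\beta)}K_{\beta,j}K_{\alpha,i}$ for $i=j\pm 1$ and the analogous $K_{\alpha,i}e_{M,j}$ rule. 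The key point to watch is that when $\phi(Z_M^{[n]})$ is multiplied by $K_\alpha^{[n]}\mapsto K_{\alpha,n}$ only the $i=n$ and (via the $K$-tail) neighbouring indices $n-1$ contribute signs, and these must telescope to give the single factor $v^{(\alpha,\hat M)}$ for $i=-1,0$ and nothing otherwise — this forces checking the cases $n=1$ and $n\ge 2$ separately for the exponent arithmetic, but each is routine.

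The main obstacle is the quadratic relation $Z_M^{[i+1]}Z_N^{[i]}=\sum_{[X],[Y]}v^{-\lr{M,N}-\lr{Y,X}}\gamma_{MN}^{XY}Z_Y^{[i]}Z_X^{[i+1]}$ (and the commutation relation $Z_M^{[i]}Z_N^{[j]}=v^{(-1)^{i-j}(M,N)}Z_N^{[j]}Z_M^{[i]}$ for $i-j>1$) for general shift $i$, not just $i=0$. For $i=0$ this is exactly the fourth relation of Proposition \ref{BR}, so $\phi$ carries it across by construction; for general $i$ one must move the $K$-tails of $\phi(Z_M^{[i+1]})$ and $\phi(Z_N^{[i]})$ past each other and past the $e$'s, collect all the resulting $v$-powers coming from relations (4.1)--(4.2) of Proposition \ref{BR}, and show the net effect is to reproduce the stated exponent after re-expressing $\gamma_{MN}^{XY}$ and using $\hat X+\hat Y=\hat M+\hat N$. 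This is the step where the twisting factor $v^{n\lr{M,M}}$ earns its keep: the mismatch between $\lr{X,X}+\lr{Y,Y}$ and $\lr{M,M}+\lr{N,N}$ under the substitution must be absorbed, and I expect this to be the only genuinely delicate exponent identity. Once all relations are verified, I define the inverse $\phi^{-1}$ on generators of $\mathcal{D}\mathcal{H}_0(\A)$ by $e_{M,0}\mapsto Z_M^{[0]}$, $K_{\alpha,n}\mapsto K_\alpha^{[n]}$, $e_{M,n}\mapsto v^{-n\lr{M,M}}Z_M^{[n]}\prod_{i=n}^{1}K_{(-1)^{i+1}\hat M}^{[n-i]}$ and similarly for negative degrees, check it is well-defined by the same kind of computation, and observe $\phi\circ\phi^{-1}$ and $\phi^{-1}\circ\phi$ are the identity on generators; alternatively, invoke the surjectivity argument above together with injectivity from the basis count to conclude $\phi$ is an isomorphism without separately constructing $\phi^{-1}$.
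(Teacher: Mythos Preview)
The paper does not give its own proof of this theorem: it is stated as a reformulation of \cite[Theorem 5.3, Corollary 5.5]{LJ} and no argument is supplied beyond that citation. So there is no ``paper's proof'' to compare against; the only additional content the paper provides is the remark immediately following the theorem, which writes down the inverse $\phi^{-1}$ explicitly on generators---essentially the formula you wrote at the end of your proposal (your product $\prod_{i=n}^{1}K_{(-1)^{i+1}\hat M}^{[n-i]}$ is, after reindexing $j=n-i$, the same monomial the paper records).

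Your plan is the natural direct-verification route and is what a self-contained proof would look like. Two comments. First, your ``dimension/basis count'' for injectivity is where the real content hides: you would need to know that $\mathcal{D}\mathcal{H}_0(\A)$ has a PBW-type basis (this is what \cite[Proposition 2.7]{ZHC2}, invoked elsewhere in the paper, provides) and that $\mathcal{D}\mathcal{H}_{\tw}^{\ce}(\A)$ has a matching basis; the latter is not trivial from the presentation in Definition \ref{wanquan} alone and is part of what Lin establishes. Constructing $\phi^{-1}$ explicitly and checking it is a two-sided inverse, as you also suggest, avoids this issue and is the cleaner path. Second, your identification of the adjacent-degree relation $Z_M^{[i+1]}Z_N^{[i]}=\sum v^{-\lr{M,N}-\lr{Y,X}}\gamma_{MN}^{XY}Z_Y^{[i]}Z_X^{[i+1]}$ as the crux is correct, and the mechanism you describe---the $K$-tails and the normalization $v^{n\lr{M,M}}$ absorbing the discrepancy between $\lr{M,M}+\lr{N,N}$ and $\lr{X,X}+\lr{Y,Y}$ via $\hat X+\hat Y=\hat M+\hat N$---is exactly how the exponent identity closes.
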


\begin{remark}
$(1)$~The inverse of $\phi$ in Theorem \ref{tonggou} is the homomorphism $\phi^{-1}:\mathcal {D}\mathcal {H}_0(\A)\to\mathcal {D}\mathcal {H}_{\tw}^{\ce}(\A)$ defined on generators (with $n>0$) by
$$e_{M,0}\mapsto Z_M^{[0]},\quad K_{\alpha,n}\mapsto K_{\alpha}^{[n]},$$
$$e_{M,n}\mapsto v^{-n\lr{M,M}}Z_M^{[n]}\prod\limits_{i=0}^{n-1}K_{(-1)^{n-i-1}\hat{M},i},~~\text{and}~~e_{M,-n}\mapsto v^{n\lr{M,M}}Z_M^{[-n]}\prod\limits_{i=1}^{n}K_{(-1)^{n-i}\hat{M},-i}.$$

$(2)$~Theorem \ref{tonggou} establishes the relation between the Bridgeland Hall algebra of bounded complexes over projectives of $\A$ and the derived Hall algebra of the bounded derived category $D^b(\A)$. In other word, one can realize the derived Hall algebra via Bridgeland's construction.
\end{remark}

As a second application of Theorem \ref{zyjg}, we have the following
\begin{theorem}
For each $i\in\mathbb{Z}$, there exists an embedding of algebras
$\varphi_i:D(\A)\hookrightarrow \mathcal {D}\mathcal {H}_{\tw}^{\ce}(\A)\otimes\mathcal {D}\mathcal {H}_{\tw}^{\ce}(\A)$. Explicitly,

$(1)$~if $i=-1$, $\varphi_i$ is defined on generators by
\begin{flalign*}
\sK_\alpha^+\mapsto K_{\alpha}^{[0]}\otimes K_{\alpha}^{[-1]},\quad \omega_M^+\mapsto\sum\limits_{[M_1],[M_2]}v^{\lr{M,M_2}}
\frac{a_{M_1}a_{M_2}}{a_M}g_{M_1M_2}^MZ_{M_1}^{[0]}K_{\hat{M}_2}^{[0]}\otimes Z_{M_2}^{[-1]}K_{\hat{M}_2}^{[-1]},
\end{flalign*}
\begin{flalign*}
\sK_\alpha^-\mapsto K_\alpha^{[-1]}\otimes K_\alpha^{[0]},\quad \omega_M^-\mapsto\sum\limits_{[M_1],[M_2]}v^{\lr{M,M_1}}
\frac{a_{M_1}a_{M_2}}{a_M}g_{M_2M_1}^MZ_{M_1}^{[-1]}K_{\hat{M}_1}^{[-1]}\otimes Z_{M_2}^{[0]}K_{\hat{M}_1}^{[0]};
\end{flalign*}

$(2)$~if $i=0$, $\varphi_i$ is defined on generators by
\begin{flalign*}
\sK_\alpha^+\mapsto K_{\alpha}^{[1]}\otimes K_{\alpha}^{[0]},\quad \omega_M^+\mapsto\sum\limits_{[M_1],[M_2]}v^{-\lr{\hat{M},\hat{M}_1}}
\frac{a_{M_1}a_{M_2}}{a_M}g_{M_1M_2}^MZ_{M_1}^{[1]}K_{\hat{M}_2}^{[1]}K_{\hat{M}_1}^{[0]}\otimes Z_{M_2}^{[0]},
\end{flalign*}
\begin{flalign*}
\sK_\alpha^-\mapsto K_\alpha^{[0]}\otimes K_\alpha^{[1]},\quad \omega_M^-\mapsto\sum\limits_{[M_1],[M_2]}v^{-\lr{\hat{M},\hat{M}_2}}
\frac{a_{M_1}a_{M_2}}{a_M}g_{M_2M_1}^MZ_{M_1}^{[0]}\otimes Z_{M_2}^{[1]}K_{\hat{M}_1}^{[1]}K_{\hat{M}_2}^{[0]};
\end{flalign*}

$(3)$~if $i<-1$, $\varphi_i$ is defined on generators by
\begin{flalign*}
&\sK_\alpha^+\mapsto K_{\alpha}^{[i+1]}\otimes K_{\alpha}^{[i]},\quad \sK_\alpha^-\mapsto K_{\alpha}^{[i]}\otimes K_{\alpha}^{[i+1]},\\
&\omega_M^+\mapsto\sum\limits_{[M_1],[M_2]}v^{x}
\frac{a_{M_1}a_{M_2}}{a_M}g_{M_1M_2}^MZ_{M_1}^{[i+1]}\prod_{j=1}^{-(i+1)}K_{(-1)^{i+j+1}\hat{M}_1}^{[-j]}K_{\hat{M}_2}^{[i+1]}\otimes Z_{M_2}^{[i]}\prod_{j=1}^{-i}K_{(-1)^{i+j}\hat{M}_2}^{[-j]}\\
&\quad\quad\quad\quad\quad\quad x=\lr{\hat{M}_1,\hat{M}_2-\hat{M}_1}-i(\lr{M_1,M_1}+\lr{M_2,M_2}),\\
&\omega_M^-\mapsto\sum\limits_{[M_1],[M_2]}v^{y}
\frac{a_{M_1}a_{M_2}}{a_M}g_{M_2M_1}^MZ_{M_1}^{[i]}\prod_{j=1}^{-i}K_{(-1)^{i+j}\hat{M}_1}^{[-j]}\otimes Z_{M_2}^{[i+1]}\prod_{j=1}^{-(i+1)}K_{(-1)^{i+j+1}\hat{M}_2}^{[-j]}K_{\hat{M}_1}^{[i+1]}\\
&\quad\quad\quad\quad\quad\quad y=\lr{\hat{M}_2,\hat{M}_1-\hat{M}_2}-i(\lr{M_1,M_1}+\lr{M_2,M_2});
\end{flalign*}

$(4)$~if $i>0$, $\varphi_i$ is defined on generators by
\begin{flalign*}
&\sK_\alpha^+\mapsto K_{\alpha}^{[i+1]}\otimes K_{\alpha}^{[i]},\quad \sK_\alpha^-\mapsto K_{\alpha}^{[i]}\otimes K_{\alpha}^{[i+1]},\\
&\omega_M^+\mapsto\sum\limits_{[M_1],[M_2]}v^{x}
\frac{a_{M_1}a_{M_2}}{a_M}g_{M_1M_2}^MZ_{M_1}^{[i+1]}\prod_{j=0}^{i}K_{(-1)^{i-j}\hat{M}_1}^{[j]}K_{\hat{M}_2}^{[i+1]}\otimes Z_{M_2}^{[i]}\prod_{j=0}^{i-1}K_{(-1)^{i-j-1}\hat{M}_2}^{[j]}\\
&\quad\quad\quad\quad\quad\quad x=\lr{\hat{M}_1,\hat{M}_2-\hat{M}_1}-i(\lr{M_1,M_1}+\lr{M_2,M_2}),
\end{flalign*}
\begin{flalign*}
&\omega_M^-\mapsto\sum\limits_{[M_1],[M_2]}v^{y}
\frac{a_{M_1}a_{M_2}}{a_M}g_{M_2M_1}^MZ_{M_1}^{[i]}\prod_{j=0}^{i-1}K_{(-1)^{i-j-1}\hat{M}_1}^{[j]}\otimes Z_{M_2}^{[i+1]}\prod_{j=0}^{i}K_{(-1)^{i-j}\hat{M}_2}^{[j]}K_{\hat{M}_1}^{[i+1]}\\
&\quad\quad\quad\quad\quad\quad y=\lr{\hat{M}_2,\hat{M}_1-\hat{M}_2}-i(\lr{M_1,M_1}+\lr{M_2,M_2}).
\end{flalign*}
\end{theorem}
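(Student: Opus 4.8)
The plan is to build $\varphi_i$ as a composite of maps already at hand, in exact parallel with the proof of Theorem~\ref{third}. Applying Theorem~\ref{third} with $m=0$ (so that $\mathbb{Z}_m=\mathbb{Z}$ and $i$ runs over all integers) gives an embedding of algebras $\psi_i\colon D(\A)\hookrightarrow\mathcal{DH}_0(\A)\otimes\mathcal{DH}_0(\A)$, and Theorem~\ref{tonggou} supplies an isomorphism of algebras $\phi\colon\mathcal{DH}_{\tw}^{\ce}(\A)\to\mathcal{DH}_0(\A)$, whose inverse $\phi^{-1}$ is written out on the generators $e_{M,0}$, $K_{\alpha,n}$, $e_{M,n}$, $e_{M,-n}$ in the Remark following that theorem. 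I would set
$$\varphi_i:=(\phi^{-1}\otimes\phi^{-1})\circ\psi_i\colon D(\A)\longrightarrow\mathcal{DH}_{\tw}^{\ce}(\A)\otimes\mathcal{DH}_{\tw}^{\ce}(\A).$$
Since $\psi_i$ is injective and $\phi^{-1}\otimes\phi^{-1}$ is an isomorphism of algebras, $\varphi_i$ is an embedding of algebras; this is the content of the commutative diagram
$$\xymatrix{D(\A)\ar@{^{(}->}[r]^-{\psi_i}\ar[rd]_-{\varphi_i}&\mathcal{DH}_0(\A)\otimes\mathcal{DH}_0(\A)\ar[d]^-{\phi^{-1}\otimes\phi^{-1}}\\&\mathcal{DH}_{\tw}^{\ce}(\A)\otimes\mathcal{DH}_{\tw}^{\ce}(\A).}$$
Thus associativity and injectivity come for free, and the only work left is to evaluate $\varphi_i$ on the generators of $D(\A)$ and put the answer into the asserted form.

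On the group-like generators this is immediate: from $\psi_i(\sK_\alpha^+)=K_{\alpha,i+1}\otimes K_{\alpha,i}$, $\psi_i(\sK_\alpha^-)=K_{\alpha,i}\otimes K_{\alpha,i+1}$ and $\phi^{-1}(K_{\alpha,n})=K_\alpha^{[n]}$ one gets $\sK_\alpha^+\mapsto K_\alpha^{[i+1]}\otimes K_\alpha^{[i]}$ and $\sK_\alpha^-\mapsto K_\alpha^{[i]}\otimes K_\alpha^{[i+1]}$ in every case. For $\omega_M^{\pm}$ I would substitute the explicit formula for $\psi_i$ from Theorem~\ref{third}, apply $\phi^{-1}$ to each tensor leg (it is an algebra homomorphism, hence distributes over products such as $e_{M_1,\,i+1}K_{\hat M_2,\,i+1}$), and then insert $\phi^{-1}(e_{M,i})$ and $\phi^{-1}(e_{M,i+1})$ from the Remark. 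The essential feature is that $\phi^{-1}(e_{M,d})$ has three qualitatively different shapes: it equals $Z_M^{[0]}$ when $d=0$; it equals $Z_M^{[d]}$ dressed by the scalar $v^{-d\lr{M,M}}$ and by a product of $K$'s in the degrees $0,1,\dots,d-1$ when $d>0$; and it equals $Z_M^{[d]}$ dressed by $v^{-d\lr{M,M}}$ and by a product of $K$'s in the degrees $-1,-2,\dots,d$ when $d<0$. Since $\psi_i$ brings in the two consecutive degrees $i$ and $i+1$, the computation separates according to how the pair $\{i,i+1\}$ meets the degree $0$: the case $i=-1$ (degrees $-1,0$), the case $i=0$ (degrees $0,1$), the case $i<-1$ (both degrees negative), and the case $i>0$ (both degrees positive) --- precisely the four cases of the statement. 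In each case one collects the $v$-exponents and uses $\hat M=\hat M_1+\hat M_2$ on the support of $g_{M_1M_2}^M$ (respectively $g_{M_2M_1}^M$) to bring them to the displayed form; for instance for the first leg of $\varphi_{-1}(\omega_M^+)$ one finds $\lr{M_1,M_2}+\lr{M_2,M_2}=\lr{\hat M,\hat M_2}$, and for $i>0$ one finds $\lr{M_1,M_2}-(i+1)\lr{M_1,M_1}-i\lr{M_2,M_2}=\lr{\hat M_1,\hat M_2-\hat M_1}-i(\lr{M_1,M_1}+\lr{M_2,M_2})=x$.

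The one genuinely fiddly point --- and the reason the cases $i=-1$ and $i=0$ are isolated --- is that, after applying $\phi^{-1}$, the $K$-factors can emerge in a slightly different order from the one displayed (for $i=0$, for example, the factor $K_{\hat M_2}^{[i+1]}$ coming from $\phi^{-1}(K_{\hat M_2,i+1})$ must be commuted past the degree-$0$ factor produced by $\phi^{-1}(e_{M_1,i+1})$), and reordering $K$'s is free except across the degrees $-1$ and $0$, where the commutation relations of Definition~\ref{wanquan} contribute nontrivial powers of $v$; these powers are exactly what turns the exponents into the $x$ and $y$ of cases~$(3)$ and~$(4)$ and produces the distinct shapes in cases~$(1)$ and~$(2)$. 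So the main obstacle is pure bookkeeping: tracking the products $\prod_j K^{[j]}_{\pm\hat M_\ell}$ emitted by the factors $\phi^{-1}(e_{M_\ell,d})$, carrying out the required reorderings via Definition~\ref{wanquan}, and checking that the accumulated $v$-exponent is the one stated. No idea beyond Theorems~\ref{zyjg}, \ref{third} and~\ref{tonggou} is needed. Equivalently, one may first transport the Heisenberg-double embeddings $\kappa_i$ and $\check\kappa_i$ of the preceding subsection along $\phi^{-1}$ to obtain $\lambda_i=\phi^{-1}\circ\kappa_i\colon HD(\A)\hookrightarrow\mathcal{DH}_{\tw}^{\ce}(\A)$ and $\check\lambda_i=\phi^{-1}\circ\check\kappa_i\colon\check{H}D(\A)\hookrightarrow\mathcal{DH}_{\tw}^{\ce}(\A)$, and then realize $\varphi_i$ as $(\lambda_i\otimes\check\lambda_i)\circ I$ with $I$ the embedding of Theorem~\ref{zyjg}.
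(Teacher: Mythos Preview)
Your proposal is correct and follows exactly the paper's approach: the paper's proof consists solely of the commutative diagram $\varphi_i=(\phi^{-1}\otimes\phi^{-1})\circ\psi_i$ that you wrote down, with the explicit formulas left implicit. Your additional discussion of the four-case bookkeeping (reordering the $K$-factors via Definition~\ref{wanquan} and collecting $v$-exponents) simply fills in details the paper omits.
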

\begin{proof}
By the following commutative diagram
$$\xymatrix{D(\A)\ar@{^{(}->}[r]^-{\psi_i}\ar[rd]_-{\varphi_i}&\mathcal {D}\mathcal {H}_0(\A)\otimes
\mathcal {D}\mathcal{H}_0(\A)\ar[d]_{\cong}^-{\phi^{-1}\otimes\phi^{-1}}\\
&\mathcal {D}\mathcal {H}_{\tw}^{\ce}(\A)\otimes\mathcal {D}\mathcal {H}_{\tw}^{\ce}(\A)}$$
we complete the proof.
\end{proof}

%


\begin{thebibliography}{99}

\bibitem{Br} T. Bridgeland, {Quantum groups via Hall algebras of complexes},
{Ann. Math.} {\bf 177} (2013), 1--21.

\bibitem{ChenD} Q. Chen and B. Deng, {Cyclic complexes, Hall polynomials and simple Lie algebras}, J. Algebra {\bf 440} (2015), 1--32.

\bibitem{Gr95} J.~A. Green,
{Hall algebras, hereditary algebras and quantum groups}, Invent.
Math. {\bf 120} (1995), 361--377.

\bibitem{Kap} M. Kapranov, Eisenstein series and quantum affine algebras, J. Math. Sci. {\bf 84}(5) (1997),1311--1360.

\bibitem{Kapranov} M. Kapranov, {Heisenberg doubles and derived categories}, J. Algebra {\bf 202} (1998), 712--744.

\bibitem{Kas} R.~M. Kashaev, {The Heisenberg double and the pentagon relation}, {Algebra i Analiz} {\bf 8}(4) (1996), 63--74.

\bibitem{LJ} J. Lin, Modified Ringel--Hall algebras, naive lattice algebras and lattice algebras, arXiv:1808.04037v1.

\bibitem{R90} C.~M. Ringel, {Hall algebras}, in: S. Balcerzyk, et al. (Eds.), Topics in Algebra, Part 1, in: Banach Center Publ. {\bf 26} (1990), 433--447.

\bibitem{R90a} C.~M. Ringel, {Hall algebras and quantum groups},
Invent. Math. {\bf 101} (1990), 583--592.

\bibitem{Sch} O. Schiffmann, {Lectures on Hall algebras},
Geometric methods in representation theory II,  1--141, S$\acute{\rm e}$min. Congr., 24-II,
Soc. Math. France, Paris, 2012.

\bibitem{SX} J. Sheng and F. Xu, {Derived Hall algebras and lattice algebras}, Algebra Colloq. {\bf 19}(03) (2012), 533--538.

\bibitem{Toen} B. To\"en, {Derived Hall algebras}, Duke Math.
J. {\bf 135} (2006), 587--615.

\bibitem{Xiao} J. Xiao, {Drinfeld double and Ringel--Green theory of Hall
algebras}, J. Algebra {\bf190} (1997), 100--144.

\bibitem{XiaoXu} J. Xiao and F. Xu, {Hall algebras associated to triangulated categories}, Duke Math. J. {\bf 143}(2) (2008), 357--373.

\bibitem{Yan} S. Yanagida, {A note on Bridgeland's Hall algebra of two-periodic complexes}, Math. Z. {\bf 282}(3) (2016), 973--991.
\bibitem{ZHC} H. Zhang, {A note on Bridgeland Hall algebras}, {Comm. Algebra} {\bf 46}(6) (2018), 2551--2560.

\bibitem{ZHC2} H. Zhang, {Bridgeland's Hall algebras and Heisenberg doubles}, {J. Algebra Appl.} {\bf 17}(6) (2018).
\end{thebibliography}
\end{document}